\newtheorem{theo}{Theorem}[section]
\newtheorem{thm}[theo]{Theorem}
\newtheorem{lem}[theo]{Lemma}
\newtheorem{cor}[theo]{Corollary}
\newtheorem{prop}[theo]{Proposition}
\newtheorem*{thmM}{Main Theorem}
\theoremstyle{definition}
\newtheorem{dfn}[theo]{Definition}
\theoremstyle{remark}
\numberwithin{equation}{section}
\def\R{\mathbb{R}}
\def\Z{\mathbb{Z}}
\def\Mc{\mathcal{M}}
\newcommand{\ql}{quadrilateral{}}
\newcommand{\md}{\mathcal{MD}}
\newcommand{\fup}{\mathcal{P}}
\newcommand{\qcp}{\mathrm{QCP}}
\newcommand{\coc}{\mathrm{co}}
\newcommand{\cmld}{\mathrm{CML(\mathcal D)}}
\newcommand{\ta}{\mathrm{T\!ag}}
\newcommand{\cpd}{\mathcal{CPD}}
\newcommand{\zc}{\mathcal{Z}}
\newcommand{\C}{\mathbb{C}}
\newcommand{\disk}{\mathbb{D}}
\newcommand{\cdisk}{\ol{\mathbb{D}}}
\newcommand{\ol}{\overline}
\newcommand{\sm}{\setminus}
\newcommand{\Tc}{\mathcal{T}}
\newcommand{\n}{\ol{n}}
\newcommand{\hell}{\hat{\ell}}
\newcommand{\qml}{\mathrm{QML}}
\newcommand{\bd}{\mathrm{Bd}}
\newcommand{\lam}{\mathcal{L}}
\newcommand{\hlam}{\mathcal{\widehat L}}
\newcommand{\lamm}{\mathcal{L}^m}
\newcommand{\fqcp}{\mathcal{QCP}}
\newcommand{\ch}{\mathrm{CH}}
\newcommand{\si}{\sigma}
\newcommand{\ph}{\varphi}
\newcommand{\uc}{\mathbb{S}}
\newcommand{\mc}{\mathfrak{C}}
\newcommand{\g}{\mathfrak{g}}
\newcommand{\mD}{\mathcal{D}}
\newcommand{\mf}{\mathfrak{m}}
\def\L{\mathbb{L}}
\renewcommand\le{\leqslant}
\def\0{\varnothing}
\begin{document}

\date{February 7, 2017}

\title[Models for spaces of dendritic polynomials]{Models for spaces of dendritic polynomials}

\author[A.~Blokh]{Alexander~Blokh}

\thanks{The first and the third named authors were partially
supported by NSF grant DMS--1201450}

\author[L.~Oversteegen]{Lex Oversteegen}

\author[R.~Ptacek]{Ross~Ptacek}

\author[V.~Timorin]{Vladlen~Timorin}


\thanks{The fourth named author has been supported by the Russian Academic
Excellence Project '5-100'.}

\address[Alexander~Blokh and Lex~Oversteegen]
{Department of Mathematics\\ University of Alabama at Birmingham\\
Birmingham, AL 35294}

\address[Ross~Ptacek and Vladlen~Timorin]
{Faculty of Mathematics\\
National Research University Higher School of Economics\\
6 Usacheva str., Moscow, Russia, 119048}

\email[Alexander~Blokh]{ablokh@math.uab.edu}
\email[Lex~Oversteegen]{overstee@math.uab.edu}
\email[Ross~Ptacek]{rptacek@uab.edu}
\email[Vladlen~Timorin]{vtimorin@hse.ru}

\subjclass[2010]{Primary 37F20; Secondary 37F10, 37F50}

\keywords{Complex dynamics; laminations; Mandelbrot set; Julia set}

\begin{abstract}
Complex 1-variable polynomials with connected Julia sets and only repelling periodic points are called \emph{dendritic}.
By results of Kiwi, any dendritic polynomial is semi-conjugate to a topological polynomial whose topological Julia set is a dendrite.
We construct a continuous map of the space of all cubic dendritic polynomials onto a laminational model that is a quotient space of a subset of the closed bidisk.
This construction generalizes the ``pinched disk'' model of the Mandelbrot set due to Douady and Thurston.
It can be viewed as a step towards constructing a model of the cubic connectedness locus.
\end{abstract}

\maketitle

\section{Introduction}\label{s:intro}

The \emph{parameter space} of complex degree $d$ polynomials is by definition the space of affine conjugacy classes of these polynomials.
Equivalently, one can talk about the space of all \emph{monic central polynomials} of degree $d$, i.e. polynomials of the form
$z^d+a_{d-2}z^{d-2}+\dots+a_0$.
Any polynomial is affinely conjugate to a monic central polynomial.
An important set is the \emph{connectedness locus} $\Mc_d$ consisting of classes of all degree $d$ polynomials $P$, whose Julia sets $J(P)$ (equivalently, whose \emph{filled Julia sets} $K(P)$) are connected.
General properties of the connectedness locus $\Mc_d$ have been studied for quite some time.
For instance, it is known that $\Mc_d$ is a compact cellular set in the parameter space of complex degree $d$ polynomials
(this was proven in \cite{BrHu} in the cubic case and in \cite{la89} for higher degrees, see also \cite{bra86};
by definition, following M. Brown \cite{bro60,bro61}, a subset of a Euclidean space $\R^n$ is \emph{cellular}\index{cellular set} if its complement in the sphere $\R^n\cup \{\infty\}$ is an open topological cell).

For $d=2$, a monic centered polynomial takes the form $P_c(z)=z^2+c$,
and the parameter space of quadratic polynomials can be identified with the plane of complex parameters $c$.
Clearly, $P_c(z)$ has a unique critical point $0$ and a unique critical value $c$ in $\C$.
Thus, we can say that polynomials $P_c(z)$ are parameterized by their critical values.
The quadratic connectedness locus is the famous \emph{Mandelbrot set}\index{Mandelbrot set} $\Mc_2$, identified with the set of complex numbers $c$ not escaping to infinity under iterations of the polynomial $P_c(z)$.
The Mandelbrot set $\Mc_2$ has a complicated self-similar structure.

\subsection{A combinatorial model for $\Mc_2$}
The  ``pinched disk'' model for $\Mc_2$ is due to Douady and Thurston \cite{dou93, thu85}.
To describe their approach to the problem of modeling $\Mc_2$, we first describe \emph{laminational} models of polynomial Julia sets (we follow \cite{bl02}) and then use them to interpret results of \cite{dou93, thu85}.
We assume basic knowledge of complex dynamics (a detailed description is given later).

Let $\uc$ be the unit circle in $\C$, consisting of all complex numbers of modulus one.
We write $\si_d:\uc\to\uc$ for the restriction of the map $z\mapsto z^d$.
We identify $\uc$ with $\R/\Z$ by the mapping taking an \emph{angle} $\theta\in\R/\Z$ to the point $e^{2\pi i\theta}\in\uc$.
Under this identification, we have $\si_d(\theta)=d\theta$.
We will write $\disk$ for the open unit disk $\{z\in\C\,|\, |z|<1\}$.

Given a complex polynomial $P$, we let $U_\infty(P)$ denote the set $\C\sm K(P)$.
This set is called the \emph{basin of attraction of infinity} of $P$.
Clearly, $\ol{U_\infty(P)}=U_\infty(P)\cup J(P)$.
If the Julia set $J(P)$ is locally connected, then it is connected, and the Riemann map
$\Psi:\C\sm \cdisk\to U_\infty(P)$ can be continuously extended to a map $\ol{\Psi}:\C\sm \disk\to \ol{U_\infty(P)}$.
This gives rise to a map $\psi=\ol{\Psi}|_{\uc}$, which semiconjugates $\si_d:\uc\to \uc$ with $P|_{J(P)}$.
Define an equivalence relation $\sim_P$ on $\uc$ so that $x \sim_P y$ if and only if $\psi(x)=\psi(y)$.
Then $\uc/{\sim_P}$ and $J(P)$ are homeomorphic, and the homeomorphism in question conjugates the map $f_{\sim_P}$ induced on $\uc/{\sim_P}$ by $\si_d$, and $P|_{J(P)}$.
It is not hard to see that the convex hulls of $\sim_P$-classes are disjoint in $\cdisk$. 

A productive idea is to consider equivalences relations $\sim$ whose properties are similar to those of $\sim_P$.
These properties will be stated precisely below.
Such equivalence relations are called \emph{laminational equivalence relations (of degree $d$}; if $d=2$ they are said
to be \emph{quadratic}, and if $d=3$ they are said to be \emph{cubic}.
The maps $f_\sim:\uc/\sim\to \uc/\sim$ induced by $\si_d$ are called \emph{topological polynomials} of degree $d$; again,
if $d=2$ they are called \emph{quadratic} and if $d=3$ they are called \emph{cubic}.
For brevity, in what follows we will talk about ``$\sim$-classes'' instead of ``classes of equivalence of $\sim$''.

An important geometric interpretation of a laminational equivalence relation $\sim$ is as follows.
For any $\sim$-class $\g$, take its convex hull $\ch(\g)$.
Consider the edges of all such convex hulls; add all points of $\uc$ to this collection of chords.
The obtained collection of (possibly, degenerate) chords in the unit disk is denoted
by $\lam_\sim$ and is called a \emph{geodesic lamination (generated by $\sim$)}.
For brevity in what follows we sometimes write ``lamination'' instead of ``geodesic lamination''.
Clearly, $\lam_\sim$ is a closed family of chords.
Let $\ol{ab}$ denote the straight line segment connecting points $a$, $b\in \uc$.
We will never use this notation for pairs of points not in $\uc$.
Recall also that points in $\uc$ are identified with their angles.
Thus, $\ol{0\frac 12}$ always means the chord of $\uc$ connecting the points with angles $0$ and $\frac 12$
(not a half-radius of the unit disk originating at the center).
For any chord $\ell=\ol{ab}$ in the closed unit disk $\cdisk$ set $\si_d(\ell)=\ol{\si_d(a) \si_d(b)}$.
For any $\sim$-class $\g$ and, more generally, for any closed set $\g\subset\uc$, we set $\si_d(\ch(\g))=\ch(\si_d(\g))$.

Recall the construction of Douady and Thurston.
Suppose that a quadratic polynomial $P_c$ has locally connected Julia set.
We will write $G_c$ for the convex hull of the $\sim_{P_c}$-class corresponding to the critical value $c$.
A fundamental theorem of Thurston is that $G_c\ne G_{c'}$ implies that $G_c$ and $G_{c'}$ are disjoint in $\cdisk$. 
Consider the collection of all $G_c$ and take its closure.
The thus obtained collection of chords and inscribed polygons defines a geodesic lamination
$\qml$ introduced by Thurston in \cite{thu85} and called the \emph{quadratic minor
lamination}; moreover, it induces an equivalence relation $\sim_{\qml}$ on $\uc$ \cite{thu85}.
The corresponding quotient space $\Mc^{comb}_2=\uc/\sim_{\qml}$ is a combinatorial model for the boundary of $\Mc_2$.
It is called the \emph{combinatorial Mandelbrot set}.
Conjecturally, the combinatorial Mandelbrot set is homeomorphic to the boundary of $\Mc_2$.
This conjecture is equivalent to the famous MLC conjecture: the Mandelbrot set is locally connected.

\subsection{Dendritic polynomials}
When defining the combinatorial Mandelbrot set, we used a partial association between parameter values $c$ and laminational equivalence relations $\sim_{P_c}$.
In order to talk about $\sim_{P_c}$, we had to assume that $J(P_c)$ was locally connected.
Recall that a \emph{dendrite} is a locally connected continuum that does not contain Jordan curves.
Recall also, that a map from a continuum to a continuum is called \emph{monotone} if under this map
point-preimages (\emph{fibers}) are connected.

\begin{dfn}\label{d:dendr-poly}
A complex polynomial $P$ is said to be \emph{dendritic} if it has connected Julia set and all cycles repelling.
A topological polynomial is said to be \emph{dendritic} if its Julia set is a dendrite.
In that case the corresponding laminational equivalence relation and
the associated geodesic lamination are also said to be \emph{dendritic}.
\end{dfn}

It is known that there are dendritic polynomials with non-locally connected Julia sets.
Nevertheless,
by \cite{kiwi97}, for \emph{every} dendritic polynomial $P$ of degree $d$, there is a monotone
semiconjugacy $m_P$ between $P:J(P)\to J(P)$ and a certain topological polynomial $f_{\sim_P}$
such that the map $m_P$ is one-to-one on all periodic and pre-periodic points of $P$.
Moreover, by \cite{bco11} the map $m_P$ is unique and can be defined in a purely topological way.
Call a monotone map $\ph_P$ of a
connected polynomial Julia set $J(P)=J$ onto a locally connected
continuum $L$ the \emph{finest monotone map of $J(P)$ onto a locally
connected continuum} if, for any monotone $\psi:J\to J'$ with $J'$
locally connected, there is a monotone map $h$ with $\psi=h\circ
\ph_P$. Then it is proven in \cite{bco11} that for \emph{any polynomial} the finest monotone map
on a connected polynomial Julia set semiconjugates $P|_{J(P)}$ to the
corresponding topological polynomial $f_{\sim_P}$ on its topological
Julia set $J_{\sim_P}$ generated by the laminational equivalence
relation possibly with infinite classes $\sim_P$, and that in the dendritic case
this semiconjugacy coincides with the map $m_P$ constructed by Kiwi in \cite{kiwi97}.
Clearly, this shows that $m_P$ is unique.

Thus, $P$ gives rise to a corresponding laminational equivalence relation $\sim_P$ even if $J(P)$ is not locally connected.
If $P_c(z)=z^2+c$ is a quadratic dendritic polynomial, then $G_c$ is defined, and
is a finite-sided polygon inscribed into $\uc$, or a chord, or a point.
A parameter value $c$ is said to be \emph{quadratic dendritic} if $P_c$ is dendritic.
The fundamental results of Thurston \cite{thu85} imply, in particular, that $G_c$ and $G_{c'}$ are either the same or disjoint, for all pairs $c$, $c'$ of dendritic parameter values.
Moreover, the mapping $c\mapsto G_c$ is \emph{upper semi-continuous}
(if a sequence of dendritic parameters $c_n$ converges to a dendritic parameter $c$, then the limit set of the corresponding convex sets $G_{c_n}$ is a subset of $G_c$).
We call $G_c$ the \emph{tag associated to $c$}.

Now, consider the union of all tags of quadratic dendritic polynomials.
This union is naturally partitioned into individual tags (distinct tags are pairwise disjoint!).
Thus the space of tags can be equipped with the quotient space topology induced from the union of tags.
On the other hand, take the set of quadratic dendritic parameters.
Each such parameter $c$ maps to the polygon $G_c$, i.e. to the tag associated to $c$.
Thus each quadratic dendritic parameter maps to the corresponding point of the space of tags.
This provides for a combinatorial (or laminational) model for the set of quadratic dendritic polynomials (or their parameters).

In this paper, we extend these results to cubic dendritic polynomials.

\subsection{Mixed tags of cubic polynomials}
Recall that monic centered quadratic polynomials are parameterized by their critical values.
A combinatorial analog of this parameterization is the association between topological polynomials and their tags.
Tags of quadratic topological polynomials are post-critical objects of the corresponding laminational equivalences.
Monic centered cubic polynomials can be parameterized by a critical value and a co-critical point.
Recall that the \emph{co-critical} point $\omega^*$ of a cubic polynomial $P$ corresponding to a simple critical point $\omega$ of $P$ is defined as a point different from $\omega$ but having the same image under $P$ as $\omega$.
If $\omega$ is a multiple critical point of $P$, then we set $\omega^*=\omega$.
In any case we have $P(\omega^*)=P(\omega)$.
Let $c$ and $d$ be the two critical points of $P$
(if $P$ has a multiple critical point, then $c=d$).
Set $a=c^*$ and $b=P(d)$.
Assuming that $P$ is monic and central, we can parameterize $P$ by $a$ and $b$:
$$
P(z)=b+\frac{a^2(a-3z)}{4}+z^3.
$$
For $P$ in this form, we have $c=-\frac a2$, $d=\frac a2$.
Similarly to parameterizing cubic polynomials by pairs $(a,b)$, we will use the so-called \emph{mixed tags} to parameterize topological cubic dendritic polynomials.

Consider a cubic dendritic polynomial $P$. By the above there exists a laminational equivalence relation
$\sim_P$ and a monotone semiconjugacy  $m_p:J(P)\to \uc/\sim_P$ between $P_{J_P}$ and
the topological polynomial $f_{\sim_P}$.
Given a point $z\in J(P)$, we associate with it the convex hull $G_{P,z}$ of the $\sim_P$-equivalence class represented by the point $m_{P}(z)\in\uc/\sim_P$.
If $P$ is fixed, we may write $G_z$ instead of $G_{P,z}$.
The set $G_z$ is a convex polygon with finitely many vertices, a chord, or a point;
it should be viewed as a combinatorial object corresponding to $z$.
For any points $z\ne w\in J(P)$, the sets $G_z$ and $G_w$ either coincide or are disjoint.

By definition, a \emph{$($critically$)$ marked} (cf \cite{mil12}) cubic polynomial is a triple $(P,c,d)$, where $P$ is a cubic polynomial with critical points $c$ and $d$.
If $P$ has only one (double) critical point, then $c=d$, otherwise $c\ne d$.
In particular, if $c\ne d$, then the triple $(P,c,d)$ and the triple $(P,d,c)$ are viewed as two distinct critically marked cubic polynomials.
When the order of the critical points is fixed, we will sometimes write $P$ instead of $(P,c,d)$.
Critically
marked polynomials do not have to be dendritic (in fact, the notion
is used by Milnor and Poirier \cite{mil12} for hyperbolic polynomials, i.e., in
the situation diametrically opposite to that of dendritic
polynomials). However in this paper whenever we talk about critically marked polynomials
we mean that they are dendritic.

Let $\mathcal{MD}_3$ be the space of all critically marked cubic dendritic polynomials.
With every marked dendritic polynomial $(P,c,d)$, we associate the corresponding \emph{mixed tag}
$$
\ta(P,c,d)=G_{c^*}\times G_{P(d)}\subset\overline{\disk}\times\overline{\disk}.
$$
Here $c^*$ is the co-critical point corresponding to the critical point $c$.

A similar construction can be implemented for any cubic dendritic laminational equivalence relation $\sim$.
Let $C$ and $D$ denote the convex hulls of its critical classes.
Then either $C=D$ is the unique critical $\sim$-class or $C\ne D$ are disjoint.
The sets $C$ and $D$ are called \emph{critical objects} of $\sim$. 
By a \emph{$($critically$)$ marked cubic laminational equivalence relation} we mean a triple $(\sim,C,D)$;
in that case we always assume that $\sim$ is dendritic.
If $C\ne D$, then we define $C^*=\coc(C)$ as the convex hull of the unique $\sim$-class that is distinct from the class $C\cap\uc$ but has the same $\si_3$-image.
If $C=D$, then we set $C^*=C$.
The set $C^*$ is called the \emph{co-critical set of $C$}.
For a marked laminational equivalence relation $(\sim,C,D)$, define its \emph{mixed tag} as
$$ \ta_l(\sim, C, D)=C^*\times \si_3(D)\subset
\overline{\disk}\times\overline{\disk}
$$
We endow the family of products of compact subsets of $\cdisk$ with the
product topology on $\mc(\cdisk)\times \mc(\cdisk)$. It is easy to see
that the map $\ta_l$ is continuous as a map defined on a subset of
$\mc(\cdisk)\times \mc(\cdisk)$. Evidently, the map $\ta_l$ preserves inclusions.

The subscript $l$ in $\ta_l$ indicates that $\ta_l$ acts on marked
laminational equivalence relations unlike the map $\ta$ that acts on
polynomials. These two maps are closely related though: for any marked
dendritic cubic polynomial $(P,c,d)$ and the corresponding marked
laminational equivalence relation $(\sim_P,G_c, G_d)$, we have
$\ta(P,c,d)=\ta_l(\sim_P,G_{c},G_{d})$.

\subsection{Statement of the main result}\label{ss:mainres}
Consider the collection of the sets $\ta(P)$ over all $P\in\md_3$. By
\cite{kiwi97, kiw05}, for any dendritic laminational equivalence
relation $\sim$, there exists a dendritic complex polynomial $P$ with
$\sim=\sim_P$. Thus, equivalently, we can talk about the collection of
mixed tags of all dendritic laminations $\lam_\sim$. In
Theorem~\ref{t:tagusc} we show that the mixed tags $\ta(P)$ are
pairwise disjoint or equal. Let us denote this collection of sets be
$\cmld$ (for \emph{cubic mixed lamination of dendritic polynomials}). Note, that $\cmld$ can be
viewed as (non-closed) ``lamination'' in $\cdisk\times \cdisk$ whose
elements are products of points, leaves or gaps. One can consider
$\cmld$ as the higher-dimensional analog of Thurston's $\qml$ restricted to
dendritic polynomials.

Theorem~\ref{t:tagusc}, in addition, establishes the fact that the
collection of sets $\cmld$ is upper semi-continuous. Let the
\emph{union} of all sets in $\cmld$ be denoted by $\cmld^+\subset
\cdisk\times \cdisk$. It follows that the quotient space of $\cmld^+$,
obtained by collapsing all elements of $\cmld$ to points, is a separable
metric space which we denote by $\mathcal{MD}^{comb}_3$. Denote by
$\pi:\cmld^+\to \mathcal{MD}^{comb}_3$  the corresponding quotient map.

\begin{thmM}
Mixed tags of critically marked polynomials from $\mathcal{MD}_3$ are disjoint or coincide.
The map $\pi\circ \ta: \md_3\to \mathcal{MD}^{comb}_3$ is continuous.
Hence $\mathcal{MD}^{comb}_3$ is a combinatorial model for $\md_3$.
\end{thmM}

This theorem can be viewed as a partial generalization of Thurston's results \cite{thu85} to cubic polynomials.
It is also a first step towards defining a combinatorial model for $\Mc_3$.

\subsection{Previous work and organization of the paper}
Lavaurs \cite{la89} proved that $\Mc_3$ is not locally connected.
Epstein and Yampolsky \cite{EY99} proved that the bifurcation locus in the space of real cubic polynomials is not locally connected either.
This makes the problem of defining a combinatorial model of $\Mc_3$ very delicate.
There is no hope that a combinatorial model would lead to a precise topological model.
Schleicher \cite{sch04} constructed a geodesic lamination modeling the space of \emph{unicritical} 
polynomials,
that is, 
polynomials with a unique multiple critical point.
We have heard of an unpublished old work of D. Ahmadi and M. Rees, in which cubic geodesic laminations were studied, however we have not seen it.
The present paper is based on the results obtained in \cite{bopt16}.
These results are applicable to invariant laminations of any degree.

The paper is organized as follows.
In Section \ref{s:basicdef}, we discuss basic properties of geodesic laminations and laminational equivalence relations.
In Section \ref{s:long}, we recall the results of \cite{bopt16} adapting them to the cubic case.
Finally, Section \ref{s:cudebi} is dedicated to the proof of the main result.

\section{Laminations and their properties}\label{s:basicdef}

By a \emph{chord} we mean a closed segment connecting two points of the unit circle.
If these two points coincide, then the chord is said to be \emph{degenerate}.

\begin{dfn}[Geodesic laminations]\label{d:geolam}
A \emph{geodesic lamination} is a collection $\lam$ of chords called \emph{leaves} that are pairwise disjoint in $\disk$;
it includes all degenerate chords, and must be such that $\lam^+=\bigcup_{\ell\in\lam}\ell$ is closed.
\emph{Gaps} of $\lam$ are the closures of the components of $\disk\sm\lam^+$.
\end{dfn}

We now introduce the notion of a (sibling) \emph{$\si_d$-invariant geodesic lamination}.
This is a slight modification of an invariant geodesic lamination introduced by Thurston \cite{thu85}.
When $d$ is fixed we will often write ``invariant'' instead of ``$\si_d$-invariant''.

\begin{dfn}[Invariant geodesic laminations \cite{bmov13}]\label{d:sibli}
A geodesic lamination $\lam$ is (sibling) \emph{$\si_d$-invariant}
provided that:
\begin{enumerate}
\item for each $\ell\in\lam$, we have $\si_d(\ell)\in\lam$,
\item \label{2}for each $\ell\in\lam$ there exists $\ell^*\in\lam$ so that $\si_d(\ell^*)=\ell$.
\item \label{3} for each $\ell\in\lam$ such that $\si_d(\ell)$ is a
    non-degenerate leaf, there exist $d$ \textbf{pairwise disjoint}
 leaves $\ell_1$, $\dots$, $\ell_d$ in $\lam$ such that
 $\ell_1=\ell$ and
 $\si_d(\ell_i)=\si_d(\ell)$ for all
  $i=2$, $\dots$, $d$.
\end{enumerate}
\end{dfn}

Observe that, since leaves are chords, and chords are closed segments, pairwise disjoint leaves in part (3) of the above definition cannot intersect even on the unit circle (that is, they cannot have common endpoints).

Call the leaf $\ell^*$ in (\ref{2}) a \emph{pullback} of $\ell$.
A \emph{sibling of $\ell$} is defined as a leaf $\ell'\ne \ell$ with $\si_d(\ell')=\si_d(\ell)$.
Thus, part (3) of the definition implies that any leaf with non-degenerate image has $d-1$ pairwise disjoint siblings.
Moreover, these siblings can be chosen to be disjoint from the leaf.
Definition~\ref{d:sibli} implies Thurston's but is slightly more restrictive \cite{bmov13}.

From now on, by \emph{$(\si_d$-$)$invariant geodesic laminations}, we always mean \emph{sibling $\si_d$-invariant geodesic laminations}.
Moreover, for brevity we often talk about laminations meaning \emph{sibling $\si_d$-invariant geodesic} laminations.

\begin{dfn}[Linked chords]\label{d:linchor}
Two \textbf{distinct} chords of $\disk$ are \emph{linked} if they intersect in $\disk$.
We will also sometimes say that these chords \emph{cross each other}.
Otherwise two chords are said to be \emph{unlinked}.
\end{dfn}

A gap $G$ is said to be \emph{infinite $($finite, uncountable$)$} if $G\cap\uc$ is infinite (finite, uncountable).
Uncountable gaps are also called \emph{Fatou} gaps.
For a closed convex set $H\subset \C$, straight segments in the boundary $\bd(H)$ of $H$ are called \emph{edges} of $H$.

\begin{dfn}[Critical sets]\label{d:cristuff}
A \emph{critical chord $($leaf$)$} $\ol{ab}$ of $\lam$ is a chord
(leaf) of $\lam$ such that $\si_d(a)=\si_d(b)$. A gap is
\emph{all-critical} if all its edges are critical. An all-critical gap
or a critical leaf (of $\lam$) is called an \emph{all-critical set} (of
$\lam$). A gap $G$ of $\lam$ is said to be \emph{critical} if it is an
all-critical gap or there is a critical chord contained in the interior of
$G$ except for its endpoints. A \emph{critical set} of $\lam$ is by
definition a critical leaf or a critical gap. We also define a
\emph{critical object} of $\lam$ as a maximal by inclusion critical
set.
\end{dfn}

Given a compact metric space $X$, the space of all its compact subsets with the Hausdorff metric is denoted by $\mc(X)$.
Note that $\lam^+$ is compact for every geodesic lamination $\lam$.
Hence it can be viewed as a point in the space $\mc(\cdisk)$.
The family of sets $\lam^+$ of all invariant
geodesic laminations $\lam$ is compact in $\mc(\cdisk)$. However the set
$\lam^+$ does not always determine the geodesic lamination $\lam$. Indeed,
any geodesic lamination $\lam$ without gaps has $\lam^+=\cdisk$. On the
other hand, already in the cubic case there are two distinct invariant
geodesic laminations $\lam_v$ and $\lam_h$ without gaps. Here $\lam_v$ contains
all vertical chords and $\lam_h$ contains all horizontal chords. Observe
that the corresponding topological Julia sets are arcs with induced
topological polynomials being non-conjugate ``saw-tooth'' maps.
However, $\lam$ itself is a point of $\mc(\mc(\cdisk))$ which determines all
the leaves of $\lam$. For this reason
we consider the family of all invariant geodesic laminations as a subspace of $\mc(\mc(\cdisk))$ with Hausdorff metric;
the notion of convergence of invariant geodesic laminations is understood accordingly.

\begin{thm}[Theorem 3.21 \cite{bmov13}]\label{t:sibliclos}
The family of all invariant geodesic laminations $\lam$ is compact in
$\mc(\mc(\cdisk))$.
\end{thm}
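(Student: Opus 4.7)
The plan is to combine compactness of the ambient Hausdorff space with closedness of the family. Since $\cdisk$ is a compact metric space, so are $\mc(\cdisk)$ and $\mc(\mc(\cdisk))$ in their respective Hausdorff metrics. Thus it suffices to fix a Hausdorff-convergent sequence $\lam_n\to\lam$ of invariant laminations and verify that $\lam$ itself satisfies every clause of \Cref{d:sibli}.

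The routine checks come first. Each element of $\lam$ is the Hausdorff limit of chords $\ell_n\in\lam_n$ and hence a (possibly degenerate) chord. The collection $\lam$ is closed in $\mc(\cdisk)$ by construction, so $\lam^+$ is closed in $\cdisk$. All singletons on $\uc$ lie in $\lam$ since they lie in every $\lam_n$. No two distinct leaves of $\lam$ cross in $\disk$, because transverse crossing is an open condition in $\mc(\cdisk)\times\mc(\cdisk)$ and would persist to approximating pairs in $\lam_n$, contradicting their pairwise disjointness in $\disk$. For condition (1), continuity of the map on chords induced by $\si_d$ yields $\si_d(\ell)=\lim\si_d(\ell_n)\in\lam$. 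For condition (2), approximate $\ell$ by $\ell_n\in\lam_n$, apply condition (2) in each $\lam_n$ to obtain a pullback $\ell_n^*\in\lam_n$, and pass to a convergent subsequence.

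Condition (3) is the heart of the proof. Given $\ell\in\lam$ with non-degenerate image, choose $\ell_n\in\lam_n$ converging to $\ell$; invariance of $\lam_n$ provides $d$ pairwise disjoint siblings $\ell_{n,1}=\ell_n,\dots,\ell_{n,d}$. A diagonal subsequence makes each $\ell_{n,i}$ converge to a chord $\ell_i\in\lam$ with $\si_d(\ell_i)=\si_d(\ell)$, and the $\ell_i$ are pairwise non-crossing in $\disk$. The main obstacle is that condition (3) demands stricter disjointness: siblings must not share endpoints on $\uc$ either, and this can fail in the limit in two ways---two sibling sequences may converge to chords sharing a circle endpoint, or two sibling sequences may collapse to the same limit chord. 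I would resolve this by a finite combinatorial selection: the $\si_d$-preimages of the fixed non-degenerate chord $\si_d(\ell)$ consist of at most $d^2$ chords governed by the fibers $\si_d^{-1}(a)$ and $\si_d^{-1}(b)$ of its endpoints, with a natural non-crossing cyclic structure. From the subcollection of preimage chords actually present in $\lam$, one extracts $d$ chords containing $\ell$ that are pairwise endpoint-disjoint in $\cdisk$, using the non-crossing structure to guarantee the extraction is always possible. This finite-combinatorial step is the only non-routine part of the argument, and it is precisely where the sibling formulation (stronger than Thurston's gap invariance) is used essentially.
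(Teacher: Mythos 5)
Your overall strategy---compactness of $\mc(\mc(\cdisk))$ plus closedness of the family of sibling invariant laminations, with the lamination axioms and conditions (1), (2) of \Cref{d:sibli} checked by routine limit and diagonal arguments---is the right one, and it is essentially the argument behind Theorem 3.21 of \cite{bmov13}; note that the present paper only cites that theorem and does not reprove it. The routine parts of your write-up are fine.

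However, the step you yourself call the heart of the proof is not actually proved, and the repair you propose would not work as stated. You allow for the possibility that two sibling sequences converge to chords sharing an endpoint, or to the same chord, and you then claim that from ``the subcollection of preimage chords actually present in $\lam$'' one can always extract $d$ pairwise disjoint preimage chords of $\si_d(\ell)$ containing $\ell$, the non-crossing cyclic structure guaranteeing the extraction. Nothing guarantees that $\lam$ contains enough such chords: a priori it contains only the (possibly collapsed) limits of the chosen sibling sequences, and from, say, two preimage chords sharing an endpoint one cannot extract even two disjoint ones; so the extraction claim is a genuine gap, not a finite check. The correct observation is that the collapse you worry about cannot occur at all. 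Write $\si_d(\ell_n)=\ol{a_nb_n}$, non-degenerate for large $n$. Each sibling $\ell_{n,i}$ joins a point of $\si_d^{-1}(a_n)$ to a point of $\si_d^{-1}(b_n)$, and since the $d$ siblings are pairwise disjoint in $\cdisk$, their endpoints use each of the $2d$ preimage points exactly once: they form a non-crossing perfect matching between the two fibers. Each fiber consists of $d$ points spaced $1/d$ apart and converges to the fiber over the corresponding endpoint of $\si_d(\ell)$; since $\si_d(\ell)$ is non-degenerate, the $2d$ limit points are pairwise distinct. Hence, after passing to a subsequence, the limit chords $\ell_1=\ell,\dots,\ell_d$ have pairwise distinct endpoints; being limits of pairwise disjoint chords they are unlinked, and unlinked chords with no common endpoints are disjoint in $\cdisk$. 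So the limit sibling collection already satisfies condition (3), and no selection step is needed. With this replacement your argument is complete and coincides with the standard proof.
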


In other words, if invariant geodesic laminations $\lam_i$ converge to a collection of chords $\lam$ in $\mc(\mc(\cdisk))$ (that is, each leaf of $\lam$ is the limit of a sequence of leaves from $\lam_i$, and each converging sequence of leaves of $\lam_i$ converges to a leaf of $\lam$), then $\lam$ is an invariant geodesic lamination itself.

\subsection{Laminational equivalence relations}\label{sss:laeqre}
Geodesic laminations naturally appear in the context of laminational equivalence relations.

\begin{dfn}[Laminational equivalence relations]\label{d:lam}
An equi\-va\-lence re\-la\-tion $\sim$ on the unit circle $\uc$ is said to be \emph{laminational} if either $\uc$ is one
equivalence class (such laminational equivalence relations are called \emph{degenerate}), or the following holds:

\noindent (E1) the graph of $\sim$ is a closed subset of $\uc \times \uc$;

\noindent (E2) the convex hulls of distinct equivalence classes are disjoint;

\noindent (E3) each equivalence class of $\sim$ is finite.

\label{d:si-inv-lam}
A laminational equivalence relation $\sim$ is called ($\si_d$-){\em invariant} if:

\noindent (D1)
$\sim$ is {\em forward invariant}: for a $\sim$-class $\g$, the set $\si_d(\g)$ is a $\sim$-class;

\noindent (D2)
for any $\sim$-equivalence class $\g$, the map $\si_d: \g\to\si_d(\g)$ extends to $\uc$ as an orientation preserving covering
map such that $\g$ is the full preimage of $\si_d(\g)$ under this covering map.
\end{dfn}

For an invariant laminational equivalence relation $\sim$, consider the \emph{topological Julia set} $J_\sim=\uc/\hspace{-5pt}\sim$ and the \emph{topological polynomial} $f_\sim:J_\sim\to J_\sim$ induced by $\si_d$.
The quotient map $\pi_\sim:\uc\to J_\sim$ semiconjugates $\si_d$ with $f_\sim|_{J_\sim}$.
A laminational equivalence relation $\sim$ admits a \textbf{canonical extension over $\C$} whose non-trivial classes are convex hulls of classes of $\sim$.
By Moore's Theorem, the quotient space $\C/\hspace{-5pt}\sim$ is homeomorphic to $\C$.
We will still denote the extended quotient map from $\C$ to $\C/\hspace{-5pt}\sim$ by $\pi_\sim$;
the corresponding point-preimages (\emph{fibers}) are the convex hulls of 
$\sim$-classes.
With any fixed identification between $\C/\sim$ and $\C$, one can extend $f_\sim$ to a branched covering map $f_\sim:\C\to \C$ of degree $d$ called a \emph{topological polynomial} too.
The complement $K_\sim$ of the unique unbounded component $U_\infty(J_\sim)$ of $\C\sm J_\sim$ is called the \emph{filled topological Julia set}.
Define the \emph{canonical geodesic lamination $\lam_\sim$ generated by $\sim$} as the collection of edges of convex hulls of all $\sim$-classes and all points of $\uc$.

\begin{lem}[Theorem 3.21 \cite{bmov13}]\label{t:qsib}
Geodesic laminations $\lam_\sim$ generated by $\si_d$-invariant laminational equivalence relations are invariant.
\end{lem}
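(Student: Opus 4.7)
The plan is to verify the three axioms of Definition~\ref{d:sibli} for $\lam_\sim$, using only properties (E1)--(E3) of the laminational equivalence relation $\sim$ together with the invariance axioms (D1)--(D2). The technical engine behind every step is (D2): for each $\sim$-class $\g$, the restriction $\si_d|_\g$ extends to $\uc$ as an orientation-preserving covering under which $\g$ is the full preimage of $\si_d(\g)$; in particular, cyclically adjacent points of $\g$ map to cyclically adjacent, or coinciding, points of $\si_d(\g)$.

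First, to handle forward invariance (condition (1)), I would take a non-degenerate $\ell=\ol{ab}\in\lam_\sim$ with $a,b$ adjacent vertices of $\ch(\g)$ for some $\sim$-class $\g$. By (D1), $\si_d(\g)$ is a $\sim$-class, and by the adjacency-preserving property above, $\si_d(\ell)=\ol{\si_d(a)\si_d(b)}$ is either an edge of $\ch(\si_d(\g))$ or a degenerate chord --- in either case a leaf. For the pullback axiom (2), I would reverse this: given an edge $\ol{AB}$ of $\ch(\g')$, pick any $\sim$-class $\g$ in the full $\si_d$-preimage of $\g'$, lift the adjacent pair $(A,B)$ through the covering $\si_d|_\g\to\g'$, and take the corresponding edge of $\ch(\g)$. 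Degenerate leaves are handled by taking set-theoretic preimages.

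The substantive step is the sibling condition (3). Given $\ell=\ol{ab}$ with non-degenerate image $\ol{AB}=\si_d(\ell)$, I would first produce the $d$ geometric candidates: since $\si_d$ is an orientation-preserving degree-$d$ cover of $\uc$ and $A\ne B$, the $2d$ points of $\si_d^{-1}(\{A,B\})$ alternate around $\uc$ between preimages of $A$ and preimages of $B$. Connecting each cyclically adjacent pair yields $d$ pairwise disjoint chords with no shared endpoints, one of which is $\ell$. It remains to show each candidate $\ol{a_ib_i}$ is a leaf. Decomposing the $\si_d$-preimage of $\g'=\si_d(\g)$ as a disjoint union of $\sim$-classes $\g_1,\dots,\g_k$ and assuming $a_i\in\g_j$, the $\g_j$-neighbour $b'$ of $a_i$ in the direction in which $B$ follows $A$ inside $\g'$ must map to $B$ by (D2); hence $\ol{a_ib'}$ is an edge of $\ch(\g_j)$. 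The key claim is then $b'=b_i$, which I would establish by contradiction: otherwise some point of a different class $\g_m$ would lie on $\uc$ strictly between $a_i$ and $b'$, forcing $\ch(\g_m)$ to cross the edge $\ol{a_ib'}\subset\ch(\g_j)$ and contradicting (E2).

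The main obstacle, and the only place where more than bookkeeping is required, is this last matching argument in (3): identifying the \emph{geometric} nearest neighbour $b_i$ of $a_i$ among preimages of $B$ with the \emph{intrinsic} $\g_j$-neighbour $b'$ prescribed by the cyclic structure of $\g_j$. Disjointness of convex hulls of distinct $\sim$-classes --- exactly axiom (E2) --- is what prevents interleaving of preimage classes on $\uc$ and collapses the two notions of adjacency into one. This is the heart of the proof; conditions (1) and (2) follow directly from the covering-map structure supplied by (D2).
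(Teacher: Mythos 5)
Your handling of conditions (1) and (2) of Definition~\ref{d:sibli} is correct (and the paper itself offers no argument for this lemma, quoting Theorem 3.21 of \cite{bmov13}), but your proof of the sibling condition (3) has a genuine gap. Both of its pillars --- that $\ell$ is one of the $d$ chords joining cyclically adjacent points of $\si_d^{-1}(\{A,B\})$, and that each such chord is a leaf --- can fail. Take the cubic laminational equivalence relation identifying $\theta$ with $\frac12-\theta$, i.e.\ one of the two gap-free ``saw-tooth'' laminations $\lam_v$, $\lam_h$ mentioned in Section~\ref{s:basicdef}; it satisfies (E1)--(E3) and (D1)--(D2), its classes are the pairs $\{\theta,\frac12-\theta\}$, and the class $\{0,\frac12\}$ yields the leaf $\ell=\ol{0\,\frac12}$, which $\si_3$ fixes with non-degenerate image. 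The six points of $\si_3^{-1}(\{0,\frac12\})$ are $0,\frac16,\frac13,\frac12,\frac23,\frac56$, so the endpoints of $\ell$ are \emph{not} adjacent in this set: $\ell$ is not among your geometric candidates, and the candidates (such as $\ol{0\,\frac16}$) are not leaves of this lamination. Your ``key claim $b'=b_i$'' is exactly what breaks: with $a_i=0$ one gets $b'=\frac12$ while $b_i=\frac16$, and the class $\{\frac16,\frac13\}$ of $\frac16$ lies entirely inside the arc $(0,\frac12)$, so its convex hull does not cross $\ol{0\,\frac12}$ and (E2) yields no contradiction. Your argument tacitly assumes that any class meeting the open arc $(a_i,b')$ must also contain a point outside it; sibling classes can instead be nested under the edge $\ol{a_ib'}$, and then the geometric nearest neighbour and the class neighbour genuinely differ.

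The workable route keeps your first move --- decompose $\si_d^{-1}(\g')$ into classes $\g_1,\dots,\g_k$, each mapping onto $\g'$ --- but builds the siblings class by class rather than from the preimage pattern of $\{A,B\}$ alone. By (D2), each $\g_j$ is the full preimage of $\g'$ under a degree-$k_j$ covering extension, so $\g_j$ contains exactly $k_j$ preimages of $A$, and since the $\g_j$ partition $\si_d^{-1}(A)$ we get $\sum_j k_j=d$. At each preimage of $A$ in $\g_j$, your adjacency-lifting argument (the one you already use for (1) and (2)) produces an edge of $\ch(\g_j)$ mapping onto $\ol{AB}$; distinct lifts inside one class cannot share a vertex, distinct classes have disjoint hulls by (E2), and $\ell$ itself is one of the lifts inside $\g$. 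This gives the required $d$ pairwise disjoint siblings, which need not be the ``parallel'' adjacent-preimage chords: in the example above they are $\ol{0\,\frac12}$, $\ol{\frac16\,\frac13}$ and $\ol{\frac23\,\frac56}$, the last two nested on either side of the first.
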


\subsection{Dendritic case}\label{ss:dend-cas}
We now 
consider dendritic laminations and corresponding topological polynomials.

\begin{dfn}\label{d:dendri-all}
An invariant geodesic lamination $\lam_\sim$ is called \emph{dendritic} if all its gaps are finite.
Then the corresponding topological Julia set $\uc/\sim$ is a dendrite.
The laminational equivalence relation $\sim$ and the topological polynomial $f_\sim$ are said to be \emph{dendritic} too.
\end{dfn}

Recall that, by \cite{kiwi97}, with every dendritic polynomial $P$ one can associate a dendritic topological polynomial
$f_{\sim_P}$ so that $P|_{J(P)}$ is monotonically semi-conjugate to $f_{\sim_P}|_{J(f_{\sim_P})}$.
By \cite{kiw05}, for every dendritic topological polynomial $f$, there exists a polynomial $P$ with $f=f_{\sim_P}$.
Below, we list some well-known properties of dendritic geodesic laminations.

\begin{dfn}[Perfect parts of geodesic laminations \cite{bopt16}]\label{perfect}
Let $\lam$ be a geodesic lamination considered as a subset of $\mc(\cdisk)$.
Then the maximal perfect subset $\lam^p$ of $\lam$ is called the \emph{perfect part} of $\lam$.
A geodesic lamination $\lam$ is called \emph{perfect} if $\lam=\lam^p$.
Equivalently, this means that all leaves of $\lam$ are non-isolated in the Hausdorff metric.
\end{dfn}

Observe that $\lam^p$ must contain $\uc$.
The following lemma is well-known.

\begin{lem}\label{l:perfectd}
Dendritic geodesic laminations $\lam$ are perfect.
\end{lem}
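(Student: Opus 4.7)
The plan is to prove every leaf $\ell\in\lam$ is a limit of other leaves in the Hausdorff metric. Degenerate leaves --- singletons $\{p\}\subset\uc$ --- are approximated trivially by nearby singletons, since $\uc\subset\lam^+$ by Definition~\ref{d:geolam}. The real work is for a non-degenerate leaf $\ell=\overline{ab}$, which, because $\lam=\lam_\sim$ is generated by a laminational equivalence, is an edge of $\ch(\g)$ for the $\sim$-class $\g\ni a,b$. Let $A$ be the open complementary arc of $\g$ on $\uc$ with endpoints $a,b$ (containing no other point of $\g$), and set $p=\pi_\sim(\g)\in J_\sim$.

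First I would observe that the subdendrite $D_A:=\pi_\sim(\overline A)\subset J_\sim$ is non-degenerate: if $D_A=\{p\}$, then $\overline A\subset\g$, contradicting finiteness of $\sim$-classes (axiom E3). Next I would perform a local analysis at $p$. In a dendritic topological Julia set, the valence of $p=\pi_\sim(\g)$ equals $|\g|$, with each branch coming from a distinct complementary arc of $\g$; in particular $D_A$ locally sits in a single one-dimensional branch $B_A$ at $p$. Therefore the $\pi_\sim$-images of small subarcs $[a,a_\delta]$ and $[b_\delta,b]$ of $\overline A$ are connected subsets of $B_A$ containing $p$, hence subarcs of $B_A$ emanating from $p$. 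These two subarcs overlap in a common subarc $[p,p_t]$ for some $t>0$, so for every $q\in(p,p_t]$ the class $\g_q:=\pi_\sim^{-1}(q)$ contains a point $a_q\in[a,a_\delta]$ and a point $b_q\in[b_\delta,b]$, both distinct from $a,b$ (since $\g_q\ne\g$).

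To finish, I would note that $\g_q$ cannot have points on both sides of $\ell$: otherwise $\ch(\g_q)$ would have an edge crossing $\ell$, contradicting the definition of a lamination. Hence $\g_q\subset\overline A$. Let $v_1,v_k$ be the vertices of $\ch(\g_q)$ that are closest, on $\overline A$, to $a$ and $b$ respectively. In the cyclic order on $\uc$, $v_k$ and $v_1$ are consecutive vertices of $\ch(\g_q)$ (via the open arc $\uc\setminus\overline A$, which meets $\g_q$ trivially), so $\overline{v_k v_1}$ is an edge of $\ch(\g_q)$ and hence a leaf of $\lam$. Letting $q\to p$ along $B_A$ forces $v_1\to a$ and $v_k\to b$ with $v_1\ne a$, $v_k\ne b$, producing a sequence of distinct leaves $\overline{v_k v_1}\to\ell$ in Hausdorff metric, as required.

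The main obstacle is the local structural claim at $p$: that each complementary arc of $\g$ corresponds to a single one-dimensional branch of $J_\sim$, so that the images of small arcs at $a$ and at $b$ on the $A$-side both land in $B_A$ and therefore overlap near $p$. This is exactly where the dendritic hypothesis is used --- without it, $p$ could lie on the boundary of a Fatou gap or $B_A$ could branch immediately, and the overlap argument producing $a_q$ near $a$ and $b_q$ near $b$ in the \emph{same} class would fail.
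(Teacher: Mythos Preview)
The paper gives no proof of this lemma at all --- it is simply labeled ``well-known'' --- so there is no approach to compare against. Your argument via the topology of the quotient dendrite $J_\sim$ is essentially sound, but one step is not justified as written: the $\pi_\sim$-images of $[a,a_\delta]$ and $[b_\delta,b]$ need not be \emph{arcs}, only subcontinua of the dendrite $\overline{B_A}=B_A\cup\{p\}$. What actually makes the overlap work is that $p$ is an \emph{endpoint} of $\overline{B_A}$ (removing $p$ leaves the connected set $B_A$); in a dendrite any two non-degenerate subcontinua through an endpoint share a non-degenerate initial arc (take the median of $p$ and one point from each subcontinuum; it cannot equal $p$ since $p\notin[q_1,q_2]\subset B_A$). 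With this fix, the rest of your argument (extracting $a_q,b_q$, then the extremal edge $\overline{v_kv_1}$) goes through cleanly.

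That said, there is a much shorter purely combinatorial route, which is presumably what ``well-known'' refers to. If a non-degenerate leaf $\ell=\overline{ab}$ of $\lam_\sim$ were isolated, it would bound a gap on each side. Since $\lam$ is dendritic, both gaps are finite; but every finite gap of $\lam_\sim$ is the convex hull of a single $\sim$-class (each edge of the gap is an edge of some $\ch(\gamma_i)$, and adjacent edges share a vertex, forcing all the $\gamma_i$ to coincide). Hence $a,b$ would lie in two different $\sim$-classes, one for each side, contradicting (E2). This avoids $J_\sim$ entirely and uses only the definition ``all gaps are finite''. Your approach buys a more geometric picture of \emph{why} leaves accumulate (via the branch structure of the Julia dendrite), at the cost of extra topological input; the gap argument is the two-line version.
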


We will need Corollary 6.6 of \cite{bopt16}, which reads:

\begin{cor}\label{c:cridisj}
Let $\lam$ be a perfect invariant geodesic lamination.
Then the critical objects of $\lam$ are pairwise disjoint and are either all-critical sets, or critical sets whose boundaries map exactly $k$-to-$1$, $k>1$, onto their images.
\end{cor}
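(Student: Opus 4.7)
The plan is to combine the perfect property of $\lam$ with the structural properties of sibling invariance, handling disjointness first and then the boundary-degree dichotomy.

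I would start by establishing a preliminary observation: in a perfect geodesic lamination $\lam$, no non-degenerate leaf can be a common boundary edge of two distinct gaps. For suppose $\ell$ were such a shared edge between gaps $G_1$ and $G_2$. Any sequence of leaves of $\lam$ approaching $\ell$ in the Hausdorff metric would eventually have to lie in the interior of $G_1$ or $G_2$, which is impossible. Thus $\ell$ would be isolated in $\lam$, contradicting $\lam = \lam^p$.

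For pairwise disjointness of critical objects, let $C_1 \ne C_2$ be two critical objects. By maximality, neither contains the other. If they shared a non-degenerate edge $\ell$, then either $\ell$ is a common edge of two gaps (ruled out by the preliminary observation), or one of the $C_i$ is $\ell$ itself and hence is contained in the other as an edge, again contradicting maximality. So the only remaining possibility is that $C_1, C_2$ meet only at a vertex $v \in \uc$. Here I would use the fact that each $C_i$ contains a critical chord $\kappa_i$ (either $C_i$ itself if it is a critical leaf, or an interior critical chord if it is a critical gap). I would then show, by tracing pullbacks and siblings of the edges at $v$ and using non-isolation, that the convex hull of the relevant critical endpoints near $v$ together with $\kappa_1, \kappa_2$ is contained in a single critical set (typically an all-critical configuration) strictly larger than either $C_i$; this contradicts maximality.

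For the second assertion, let $C$ be a critical object. If $C$ is a critical leaf, it is tautologically all-critical. Otherwise $C$ is a critical gap, and by the structural theory for sibling-invariant laminations the map $\si_d|_{\partial C}: \partial C \to \partial \si_d(C)$ factors as a monotone collapse of exactly the critical edges of $\partial C$ followed by a covering of some degree $k\ge 1$. If every edge of $C$ is critical, the collapse is total and $C$ is all-critical. Otherwise at least one edge survives; but since $C$ is critical (by definition contains a critical chord in its interior), the surviving covering cannot be injective, forcing $k > 1$ and giving the desired exactly $k$-to-$1$ behaviour on the non-collapsed part of $\partial C$.

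The main obstacle is the shared-vertex sub-case of disjointness: ruling out two distinct critical objects meeting at a single point of $\uc$ requires a delicate combinatorial argument that leverages both the sibling property (to produce enough companion leaves at $v$) and perfectness (to guarantee that the produced configuration is genuinely in $\lam$). The degree analysis, by contrast, is a routine consequence of the structure theorem for boundary maps of gaps once disjointness is secured.
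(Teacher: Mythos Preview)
The paper does not give its own proof of this statement: it is quoted as Corollary~6.6 of \cite{bopt16} and used as a black box. So there is no in-paper argument to compare against, and your attempt is effectively a proposal for a proof of the cited result.

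Your preliminary observation (in a perfect lamination no leaf borders two gaps) is correct and cleanly disposes of the shared-edge case for disjointness. The shared-vertex case, however, is not proved: the paragraph beginning ``I would then show, by tracing pullbacks and siblings\dots'' is a wish list, not an argument. To carry out your plan you must produce an actual critical \emph{set of $\lam$}---i.e.\ a critical leaf or a critical gap of $\lam$---strictly containing one of the $C_i$. A convex hull of some critical endpoints is not automatically a gap of $\lam$; you would need to exhibit specific leaves of $\lam$ bounding it and argue that nothing else lies inside, and none of that is done. You yourself flag this as the main obstacle, and it remains one.

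There is a second, subtler gap in the degree dichotomy. Your argument shows that, after collapsing critical edges, the residual covering on $\partial C$ has degree $k>1$; but it does not exclude the mixed situation in which a critical object has some critical edges and some non-critical ones. Under the natural reading of ``boundaries map exactly $k$-to-$1$'' this mixed case must be ruled out, and doing so is exactly where perfectness (beyond your preliminary observation) should enter. Your paragraph on the degree analysis invokes neither perfectness nor the disjointness you just established, despite your closing claim that it is ``a routine consequence\dots once disjointness is secured''.
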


By \Cref{l:perfectd}, \Cref{c:cridisj} applies to dendritic geodesic laminations.
Moreover, by properties of dendritic geodesic laminations, all their critical objects are finite.

Finally, let us state a property that follows from \Cref{d:dendri-all}.
If $\lam$ is a dendritic geodesic lamination, and $\lam'\supset \lam$ is an invariant geodesic lamination,
then it follows that $\lam'$ can be obtained from $\lam$ by inserting leaves in some grand orbits of gaps of $\lam$. Moreover, as long as this insertion is done in a dynamically consistent fashion (added leaves do not cross and form a fully invariant set), the new collection of leaves $\lam'$ is an invariant geodesic lamination.
On the other hand, the fact that $\lam'$ is closed follows from the fact that if gaps of $\lam$ converge, then they must
converge to a leaf of $\lam$.

\section{Linked quadratically critical geodesic laminations}\label{s:long}

Now we will review results of \cite{bopt16} that are essential for this paper.
Let us emphasize that results of \cite{bopt16} hold for any degree.
However, we will adapt them here to degree three, omitting the general formulations.

Consider a quadratic lamination $\lam$ with a critical quadrilateral $Q$.
Thurston \cite{thu85} associates to $\lam$ its minor $\mf=\si_2(Q)$.
Then $Q\cap\uc$ is the full $\si_2$-preimage of $\mf\cap\uc$.
Thurston proves that different minors obtained in this way never cross in $\disk$.
Observe that two minors cross if and only if the vertices of the corresponding critical quadrilaterals strictly alternate in $\uc$.
Thurston's result can be translated as follows in terms of critical quadrilaterals.
If two quadratic laminations generated by laminational equivalences have critical quadrilaterals whose vertices
strictly alternate, then the two laminations are the same.
This motivates Definition~\ref{d:strolin}.

\begin{dfn}\label{d:strolin}
Let $A$ and $B$ be two quadrilaterals.
Say that $A$ and $B$ are \emph{strongly linked} if the vertices of $A$ and $B$ can be numbered so that
$$
a_0\le b_0\le a_1\le b_1\le a_2\le b_2\le a_3\le b_3\le a_0,
$$
where $a_i$, $0\le i\le 3$, are vertices of $A$ and $b_i$, $0\le i\le 3$ are vertices of $B$.
\end{dfn}

Since we want to study critical quadrilaterals in the degree three case, we now give a general definition.

\begin{dfn}\label{d:qll}
A \emph{(generalized) critical quadrilateral} $Q$ is a circularly ordered quadruple $[a_0,a_1,a_2,a_3]$ of points $a_0\le a_1\le a_2\le a_3\le a_0$ in $\uc$, where $\ol{a_0a_2}$ and $\ol{a_1a_3}$ are critical chords called \emph{spikes};
critical quadrilaterals $[a_0,a_1,a_2,a_3]$, $[a_1,a_2,a_3,a_0]$, $[a_2,a_3,a_0,a_1]$ and $[a_3,a_0,a_1,a_2]$ are viewed as equal.
\end{dfn}

We will often say ``critical quadrilateral'' when talking about the convex hull of a critical quadrilateral.
Clearly, if all vertices of a critical quadrilateral are distinct, or if its convex hull is a critical leaf, then the quadrilateral is uniquely defined by its convex hull.
However, if the convex hull is a triangle, this is no longer true.
For example, let $\ch(a, b, c)$ be an all-critical triangle.
Then $[a,a,b,c]$ is a critical quadrilateral, but so are $[a,b,b,c]$ and $[a,b,c,c]$.
If all vertices of a critical quadrilateral $Q$ are pairwise distinct, then we call $Q$ \emph{non-degenerate}.
Otherwise $Q$ is called \emph{degenerate}.
Vertices $a_0$ and $a_2$ ($a_1$ and $a_3$) are called \emph{opposite}.

Considering invariant geodesic laminations, all of whose critical sets are critical quadrilaterals, is not very restrictive:
we can ``tune'' a given invariant geodesic lamination by inserting new leaves into its critical sets in order to construct a new invariant geodesic lamination with all critical objects being critical quadrilaterals.

\begin{lem}[Lemma 5.2 \cite{bopt16}]\label{l:qls}
The family of all critical quadrilaterals is closed.
The family of all critical quadrilaterals that are critical sets of invariant geodesic laminations is closed too.
\end{lem}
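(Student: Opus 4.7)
The plan splits naturally into the two closedness statements. For the first, I would view critical quadrilaterals as points of $\uc^4$ (modulo cyclic relabeling), with the natural product topology. The defining conditions --- the non-strict circular order $a_0\le a_1\le a_2\le a_3\le a_0$ together with the two critical identities $\si_3(a_0)=\si_3(a_2)$ and $\si_3(a_1)=\si_3(a_3)$ --- are all closed conditions by continuity of $\si_3$. Passing to a subsequence with coordinate-wise convergence $a_i^n\to a_i$ then shows that the limit is again a (possibly degenerate) critical quadrilateral.

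For the second statement, suppose $Q_n$ is a critical quadrilateral that is a critical set of a sibling $\si_3$-invariant geodesic lamination $\lam_n$, and $Q_n\to Q$. I would first invoke \Cref{t:sibliclos} to pass to a further subsequence along which $\lam_n\to \lam$ in $\mc(\mc(\cdisk))$, with $\lam$ an invariant geodesic lamination, and then verify that $Q$ is a critical set of $\lam$. The four edges of $Q_n$ (with multiplicity, allowing degenerate edges) are leaves of $\lam_n$, so their limits, which comprise the edges of $Q$, are leaves of $\lam$. Continuity of $\si_3$ guarantees that the two diagonals of $Q$ remain critical chords.

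A short case analysis on the degenerate shape of $Q$ finishes the proof. If all four vertices of $Q$ are distinct, $Q$ is a genuine convex quadrilateral and its interior is disjoint from $\lam^+$: any leaf $\ell$ of $\lam$ in the interior of $Q$ would arise as a Hausdorff limit of leaves $\ell_n$ of $\lam_n$, and such $\ell_n$ would eventually lie in the interior of $Q_n$, contradicting that $Q_n$ is a gap of $\lam_n$. This makes $Q$ a gap of $\lam$, and the critical diagonals in its interior certify it as a critical gap. If exactly one pair of adjacent vertices of $Q$ coincides, the convex hull becomes a triangle; the coincidence $a_i=a_{i+1}$ combined with the two critical-diagonal identities forces $\si_3$ to take the same value on all three remaining vertices, so every edge of the triangle is a critical chord, producing an all-critical gap of $\lam$. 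If the convex hull degenerates further to a chord or a point, then $Q$ itself is a (possibly degenerate) critical leaf of $\lam$, since it is a Hausdorff limit of leaves of $\lam_n$ and its endpoints have equal $\si_3$-image.

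The main technical obstacle is handling the degenerate limits uniformly: the convex hull of $Q$ may have strictly fewer than four distinct vertices, so one must check both that the appropriate critical structure survives and that, when a non-empty interior remains, it is genuinely a component of $\disk\setminus\lam^+$ rather than a proper sub-region of a larger gap. The former is handled by the chain of equalities forced by the coinciding vertices; the latter follows from the observation that every edge of the limit convex hull is already a leaf of $\lam$, so these edges separate $\disk$ and pin down the interior as a gap of $\lam$.
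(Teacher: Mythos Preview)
The paper does not give its own proof of this lemma; it is quoted from \cite{bopt16} (Lemma~5.2 there). There is therefore no in-paper argument to compare against.

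Your argument is correct and is the natural one. The first assertion follows at once from the continuity of $\si_d$ and the closedness of the non-strict cyclic-order constraint. For the second, invoking \Cref{t:sibliclos} to extract a limit lamination $\lam$ and then checking that the limit quadrilateral $Q$ is a critical set of $\lam$ is the right strategy, and your case split on the number of distinct vertices of $Q$ covers what is needed.

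One step deserves tightening. In the four-vertex case you write that an approximating leaf $\ell_n$ ``would eventually lie in the interior of $Q_n$''. In fact, any leaf $\ell\in\lam$ meeting the interior of $Q$ but not equal to an edge of $Q$ must be a \emph{diagonal} of $Q$: its endpoints lie on $\uc$, and it cannot cross the edges of $Q$ since those are already leaves of $\lam$. Then the endpoints of $\ell_n$ converge to opposite vertices of $Q$; for large $n$ either both endpoints of $\ell_n$ are vertices of $Q_n$, making $\ell_n$ a diagonal of the gap $Q_n$ (impossible), or at least one endpoint lies strictly on an open arc between consecutive vertices of $Q_n$, forcing $\ell_n$ to cross an edge of $Q_n$ (also impossible). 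With this said explicitly the argument is airtight.
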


Being strongly linked is a closed condition on two quadrilaterals: if two sequences of critical quadrilaterals $A_i$, $B_i$ are such that $A_i$ and $B_i$ are strongly linked and $A_i\to A$, $B_i\to B$, then $A$ and $B$ are strongly linked critical quadrilaterals.

In \cite{bopt16}, linked invariant geodesic laminations with quadratically critical portraits are defined
for any degree $d$.
Below, we 
adapt this in the case of cubic laminations.
Let $\lam$ be a cubic geodesic lamination; in fact, in what follows when talking about
laminations we always mean \emph{cubic} laminations (unless explicitly stated otherwise).
Consider critical quadrilaterals $Q^1$, $Q^2$ that are leaves or gaps of $\lam$.
The pair $(Q^1,Q^2)$ is called a \emph{quadratically critical portrait} of $\lam$ if $Q^1$ and $Q^2$
are distinct. 
The triple $(\lam,Q^1,Q^2)$ is then called a \emph{cubic lamination with quadratically critical portrait}.
Sometimes, when the quadratically critical portrait is fixed, we write $\lam$ instead of $(\lam,Q^1,Q^2)$.
Observe that not all cubic geodesic laminations admit quadratically critical portraits.
For example, if $\lam$ has a unique critical object that is not all-critical, then $\lam$ has no quadratically critical portrait.
If $\lam$ has two disjoint critical objects, then $Q^1$, $Q^2$ must coincide with these objects.
In particular, a cubic geodesic lamination with two disjoint critical objects admits a quadratically critical portrait if and only if both critical objects are (possibly degenerate) critical quadrilaterals.

Assume that $\lam$ has an all-critical triangle $\Delta$.
Then possible quadratically critical portraits of $\lam$ are:
\begin{enumerate}
 \item pairs of distinct edges of $\Delta$; and
 \item pairs consisting of $\Delta$ and an edge of it.
\end{enumerate}

Now we define linked laminations with quadratically critical portraits.

\begin{dfn}\label{d:qclink1}
Let $(\lam_1,Q_1^1,Q_1^2)$ and $(\lam_2,Q_2^1,Q_2^2)$ be cubic invariant geodesic la\-mi\-na\-tions with quadratically critical portraits.
These two laminations are said to be \emph{linked or essentially equal} if one of the following holds:
\begin{enumerate}
 \item
 For every $j=1,2$, the quadrilaterals $Q_1^j$ and $Q_2^j$ are strongly linked.
 If $Q_1^j$ and $Q_2^j$ share a spike for every $j=1,2$, then $\lam_1$ and $\lam_2$ are said to be essentially equal.
 \item
 The laminations $\lam_1$ and $\lam_2$ share the same all-critical triangle.
 Then $\lam_1$ and $\lam_2$ are also said to be essentially equal.
\end{enumerate}
If (1) or (2) holds but $\lam_1$, $\lam_2$ are not essentially equal, then $\lam_1$, $\lam_2$ are said to be \emph{linked}.
We can also talk about linked or essentially equal quadratically critical portraits without referring to laminations containing them.
\end{dfn}

Critically marked polynomials, topological polynomials, and
laminational equivalence relations were defined in the introduction;
recall that there they are all assumed to be dendritic.
Let us now define critically marked cubic \emph{geodesic} laminations.
Suppose that $\lam$ is a cubic geodesic lamination and an ordered pair
of critical sets (gaps or leaves) $C, D$ of $\lam$ is chosen so that on
the boundary of each component $E$ of $\cdisk\sm (C\cup D)$ the map
$\si_3$ is one-to-one (except for the endpoints of possibly existing
critical edges of such components). Then we can consider $(\lam, C, D)$
as a \emph{(critically) marked} lamination even though $\lam$ is not
necessarily dendritic. For brevity we often talk about \emph{marked}
(topological) polynomials and laminations rather than \emph{critically
marked} ones. Let $(\lam,C^1,C^2)$ be a marked cubic geodesic
lamination. Then $(C^1,C^2)$ is called a \emph{critical pattern} of
$\lam$; when talking about \emph{critical patterns} we mean critical
patterns of some marked lamination $\lam$ and allow for $\lam$ to be
unspecified.

Let $\lam$ be a dendritic lamination. Then if $C\ne D$ are its critical
sets, the only two possible critical patterns that can be associated
with $\lam$ are $(C, D)$ or $(D, C)$. Now, if $\lam$ has a unique
critical set $X$ which is not an all-critical triangle then the only
possible critical pattern of $\lam$ is $(X, X)$. However if $\lam$
has a unique critical set $\Delta$ which is an all-critical triangle then
there are more possibilities for a critical pattern of $\lam$. Namely,
by definition a critical pattern of $\lam$ can be either $(\Delta, \Delta)$,
of $\Delta$ and an edge of $\Delta$, or an edge of $\Delta$ and $\Delta$, or
an ordered pair of two edges of $\Delta$.

We defined linked or essentially equal cubic laminations with quadratically critical portraits.
Let us extend this notion to marked laminations and their critical patterns. A \emph{collapsing quadrilateral} is a critical
quadrilateral that maps to a non-degenerate leaf.

\begin{dfn}\label{d:link3}
Marked laminations $(\lam_1,C_1^1,C_1^2)$ and $(\lam_2,C_2^1,C_2^2)$,
and their critical patterns $(C_1^1,C_1^2)$ and $(C_2^1,C_2^2)$, are
said to be \emph{linked} (\emph{essentially equal}) if there are
quadratically critical portraits $(Q_1^1,$ $Q_1^2)$ and $(Q_2^1,Q_2^2)$
such that $Q_i^j\subset C_i^j,\quad \lamm_i\supset\lam_i,\quad
i,j=1,2,$ for which if $Q_i^j$ is a collapsing quadrilateral, then it
shares a pair of opposite edges with $C_i^j$.
\end{dfn}

Let $Q$ be one of $Q_i^j$, and $C$ be the corresponding $C_i^j$.
Then the image of $Q$ is an edge of $\si_3(C)$. 
Hence forward images of quadrilaterals $Q_i^j$ will never form linked leaves.
Pulling back the sets $Q_i^j$, we construct the ``tuned'' laminations $\lamm_1$ and $\lamm_2$
with quadratically critical portraits $Q_1^1,$ $Q_1^2)$ and $(Q_2^1,Q_2^2)$. The existence
of ``tuned'' laminations with quadratically critical portraits follows
from our definitions and Thurston's pullback construction.

Lemma~\ref{l:actri} follows immediately from definitions.

\begin{lem}\label{l:actri}
If two limit laminations contain the same all-critical
triangle, then they are essentially equal.
\end{lem}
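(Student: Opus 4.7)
The plan is to unpack the two layers of definitions and invoke clause~(2) of Definition~\ref{d:qclink1}. Since the lemma is advertised as an immediate consequence of definitions, the work is essentially bookkeeping; the only content is a degree-counting observation about all-critical triangles.

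First, I would observe that an all-critical triangle $\Delta$ in a cubic invariant geodesic lamination absorbs all the available criticality: the restriction $\si_3|_{\bd(\Delta)}$ is three-to-one onto a point, so $\Delta$ is a critical object of degree three, exhausting the total degree of the map. Hence any cubic lamination $\lam$ that contains $\Delta$ has $\Delta$ as its unique critical object, and, as recorded in the paragraph preceding Definition~\ref{d:link3}, every critical pattern $(C^1, C^2)$ of such a lamination is an ordered pair with each $C^j$ equal to $\Delta$ or to an edge of $\Delta$.

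Second, given two limit laminations $(\lam_1, C_1^1, C_1^2)$ and $(\lam_2, C_2^1, C_2^2)$ both containing $\Delta$, I would produce the required pair of quadratically critical portraits by the natural choice: for each $i \in \{1,2\}$ and $j \in \{1,2\}$, take $Q_i^j \subset C_i^j$ to be any edge of $\Delta$ lying in $C_i^j$ (in particular $Q_i^j$ may be taken to equal $C_i^j$ itself whenever $C_i^j$ is already an edge of $\Delta$, and any edge of $\Delta$ when $C_i^j = \Delta$). Each such $Q_i^j$ is a degenerate critical quadrilateral of the form $[a,a,b,b]$ whose $\si_3$-image is the single point $\si_3(a) = \si_3(b)$; in particular no $Q_i^j$ is a collapsing quadrilateral, the ``shared pair of opposite edges'' side-condition of Definition~\ref{d:link3} is vacuous, and no passage to a proper tuning $\lamm_i \supset \lam_i$ is required.

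Finally, with these quadratically critical portraits in hand, $\lam_1$ and $\lam_2$ share the all-critical triangle $\Delta$ by hypothesis, so clause~(2) of Definition~\ref{d:qclink1} applies verbatim: the two cubic laminations with quadratically critical portraits are essentially equal, and pulling this back through Definition~\ref{d:link3} gives that the marked laminations themselves are essentially equal. I expect no serious obstacle; the only step carrying genuine content is the degree-count argument ruling out additional critical objects outside $\Delta$, while the remainder is direct matching against the clauses of Definitions~\ref{d:qclink1} and~\ref{d:link3}.
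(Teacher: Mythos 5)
Your proposal is correct and matches the paper, which offers no written argument at all (it states that Lemma~\ref{l:actri} ``follows immediately from definitions''); your unpacking — the degree count showing the all-critical triangle $\Delta$ exhausts criticality, so every critical set of either lamination is $\Delta$ or one of its edges, followed by choosing edges of $\Delta$ as the $Q_i^j$ and invoking clause (2) of Definition~\ref{d:qclink1} via Definition~\ref{d:link3} with no tuning and with the collapsing-quadrilateral condition vacuous — is exactly the intended content. The only point to tighten is that a quadratically critical portrait requires $Q_i^1\ne Q_i^2$, so ``any edge of $\Delta$ lying in $C_i^j$'' should be sharpened to a choice of two distinct edges (or $\Delta$ itself paired with an edge), which is always possible because $(e,e)$ with $e$ a single edge is never a valid critical pattern.
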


The following is a special case of one the central results of \cite{bopt16}
(in \cite{bopt16} more general results are obtained for all degrees).

\begin{thm}[Theorem 9.6 \cite{bopt16}]\label{t:noesli}
Let $(\lam_1,C_1^1,C_1^2)$ and $(\lam_2,C_2^1,C_2^2)$ be marked laminations.
Suppose that $\lam_1$ is perfect (e.g., by Lemma~\ref{l:perfectd} $\lam_1$ may be dendritic).
If they are linked or essentially equal then $\lam_1\subset\lam_2$ and $C_1^j\supset C_2^j$ for $j=1,2$.
In particular, if both laminations are perfect, then $(\lam_1,C_1^1,C_1^2)=(\lam_2,C_2^1,C_2^2)$.
\end{thm}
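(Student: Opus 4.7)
The plan is to reduce to an argument about the quadratically critical portraits $(Q_1^1,Q_1^2)$ and $(Q_2^1,Q_2^2)$ supplied by \Cref{d:link3}, and then carry out a Thurston-style pullback. The essential content is that strong linking of $Q_1^j$ with $Q_2^j$ pins down, uniformly, the branches of $\si_3^{-1}$ used to pull back leaves, while perfectness of $\lam_1$ prevents the discrepancies that would normally arise along the forward orbits of critical sets.

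First I would handle the essentially-equal case. If $\lam_1$ and $\lam_2$ share an all-critical triangle, \Cref{l:actri} applies directly; otherwise, by \Cref{d:qclink1}(1), the quadrilaterals $Q_1^j$ and $Q_2^j$ share a pair of opposite edges for $j=1,2$, so Thurston's pullback produces the same preimage for each non-critical chord. Iterating along forward orbits forces any leaf of $\lam_1$ to appear in $\lam_2$. For the genuinely linked case I would argue by contradiction: suppose some $\ell\in\lam_1$ crosses some $m\in\lam_2$ in $\disk$. Using \Cref{c:cridisj} and the perfectness of $\lam_1$, I may perturb within $\lam_1$ to assume that the forward iterates $\si_3^n(\ell)$ and $\si_3^n(m)$ remain non-degenerate non-critical chords for all $n\ge 0$. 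The strong linking $a_0\le b_0\le a_1\le b_1\le a_2\le b_2\le a_3\le b_3$ between $Q_1^j$ and $Q_2^j$ forces the two arcs of $\uc\sm Q_1^j$ that are identified by $\si_3$ to agree with the analogous arcs of $\uc\sm Q_2^j$ outside the shrinking ``spike'' neighborhoods; consequently, the $\si_3$-preimages needed to lift leaves in $\lam_1$ and in $\lam_2$ assemble compatibly. Pulling back a common forward iterate of the pair $(\ell,m)$ repeatedly produces pairs of leaves, one in each lamination, still linked in $\disk$ yet arbitrarily close to $\uc$; this contradicts the fact that $\lam_2$ is a geodesic lamination whose leaves are pairwise unlinked.

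The main obstacle is making the pullback ``compatibility'' precise when a forward iterate of $\ell$ or $m$ approaches one of the $Q_i^j$ or shares an endpoint with it; here the hypothesis of \Cref{d:link3}, that $Q_i^j$ shares a pair of opposite edges with $C_i^j$ whenever $Q_i^j$ is collapsing, combined with the disjointness and finiteness of the critical objects provided by \Cref{c:cridisj}, is used to route leaves through the correct preimage branch. Once $\lam_1\subset\lam_2$ is established, the inclusion $C_1^j\supset C_2^j$ is formal: since $\lam_2$ refines $\lam_1$, every gap of $\lam_2$ sits inside a gap of $\lam_1$, and the critical gap $C_2^j$ must lie inside the critical object of $\lam_1$ containing $Q_1^j\subset C_1^j$, namely $C_1^j$ itself. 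The ``in particular'' clause follows by symmetry: if $\lam_2$ is also perfect, interchanging the roles of the two laminations yields $\lam_2\subset\lam_1$ and $C_2^j\supset C_1^j$, hence equality in both.
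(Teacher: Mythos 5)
This statement is not proved in the paper at all: it is imported verbatim as Theorem 9.6 of \cite{bopt16}, where it is one of the central results and rests on a long development (joint pullbacks/forward images of leaves of two linked laminations, the machinery the authors of \cite{bopt16} call ``smart criticality''). So your attempt cannot be compared with an in-paper argument; judged on its own, it is a sketch that asserts precisely the hard steps rather than proving them. The decisive gap is the concluding ``contradiction'': producing pairs of linked leaves, one in $\lam_1$ and one in $\lam_2$, close to $\uc$ contradicts nothing. Leaves within a single geodesic lamination are pairwise unlinked, but nothing a priori prevents a leaf of $\lam_1$ from crossing a leaf of $\lam_2$ --- ruling that out is exactly what the theorem claims, so the argument is circular at the point where it should close. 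Likewise, ``the $\si_3$-preimages needed to lift leaves in $\lam_1$ and in $\lam_2$ assemble compatibly'' is the entire technical content of the theorem: strong linking only says the vertices of $Q_1^j$ and $Q_2^j$ weakly alternate; it does not make the circle identifications induced by the two portraits agree outside small neighborhoods of the spikes, and choosing sibling branches for the two laminations so that linkage of a crossing pair persists under pullback (or under forward iteration, which is how \cite{bopt16} actually argues, with perfectness of $\lam_1$ used to finish) is where all the work lies. The auxiliary reductions are also shaky: the claim that you may ``perturb within $\lam_1$'' so that the forward orbits of $\ell$ and $m$ avoid critical and degenerate chords is unjustified ($m$ lives in $\lam_2$, which need not be perfect), and in the essentially equal case you misquote \Cref{d:qclink1} --- essential equality means the quadrilaterals share a \emph{spike}, not a pair of opposite edges (sharing opposite edges with $C_i^j$ is the condition on collapsing quadrilaterals in \Cref{d:link3}); even granting shared spikes, the inclusion $\lam_1\subset\lam_2$ still requires the perfectness argument and is not a formal consequence of a common pullback scheme.

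The final deductions are closer to correct but still glossed: once $\lam_1\subset\lam_2$ is known, $C_2^j$ lies in a gap or leaf of $\lam_1$, and one wants that gap to be $C_1^j$; when $Q_1^j$ and $Q_2^j$ genuinely cross this follows from disjointness of interiors of gaps of $\lam_1$, but in the degenerate cases (shared vertices or edges, critical leaves, all-critical triangles) additional case analysis is needed, which \cite{bopt16} carries out and your ``formal'' step does not. The symmetry argument for the ``in particular'' clause is fine.
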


\section{Proof of the main result}\label{s:cudebi}
In the rest of the paper, we define a visual parameterization of the family 
of all marked cubic dendritic geodesic laminations.

\subsection{Limit laminations}\label{ss:ll}
Let $(\lam_i, \zc_i)$ be a sequence of marked laminations with critical
patterns $\zc_i=(C_i^1, C^2_i)$ where $C_i^1\cap \uc, C_i^2\cap \uc$
are finite. Assume that the sequence $\lam_i$ converges to an invariant
lamination $\lam_\infty$ (see Theorem~\ref{t:sibliclos}); then the
critical sets $C_i^1$, $C_i^2$ converge to gaps (or leaves)
$C_\infty^1, C_\infty^2$ of $\lam_\infty$. We say that the
sequence $(\lam_i, \zc_i)$ \emph{converges} to $(\lam_\infty,
C_\infty^1, C_\infty^2)$.

\begin{lem}\label{l:decrilim}
Let a sequence $(\lam_i, \zc_i)$ of marked laminations with finite
critical sets converge to $(\lam_\infty, C_\infty^1, C_\infty^2)$. Then
sets $C_\infty^1, C_\infty^2$ are critical and non-periodic, and
$(\lam_\infty, C_\infty^1, C_\infty^2)$ is a marked lamination.
\end{lem}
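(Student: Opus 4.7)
The plan is to verify in turn that $C_\infty^1$ and $C_\infty^2$ are critical sets of $\lam_\infty$, that neither is periodic under $\si_3$, and that on the boundary of each component of $\cdisk\sm(C_\infty^1\cup C_\infty^2)$ the map $\si_3$ is one-to-one away from endpoints of critical edges, so that $(\lam_\infty,C_\infty^1,C_\infty^2)$ is a marked lamination. By Theorem~\ref{t:sibliclos}, $\lam_\infty$ is an invariant lamination. Since every edge of the Hausdorff limit $C_\infty^j$ is the limit of edges of the $C_i^j$, and edges of $C_i^j$ are leaves of $\lam_i$, each edge of $C_\infty^j$ is a leaf of $\lam_\infty$; hence $C_\infty^j$ is a point, a leaf, or a gap of $\lam_\infty$.

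For criticality, I would use that every finite critical set $C_i^j$ contains a critical chord: a critical leaf is itself such a chord, and a finite critical gap by definition is either all-critical or contains a critical chord in its interior (except possibly at endpoints). After passing to a subsequence, these chords converge to a critical chord $\ell^j\subset C_\infty^j$. If $C_\infty^j$ collapses to a point or a leaf, then $C_\infty^j=\ell^j$ is a (possibly degenerate) critical leaf. Otherwise $C_\infty^j$ is a gap containing $\ell^j$, and the only delicate case is when $\ell^j$ slips onto an edge of $C_\infty^j$. I would then extract additional critical chords of $C_i^j$ whose existence is forced by the degree of $C_i^j$ being $2$ or $3$, and pass to the limit to place a critical chord in the interior of $C_\infty^j$, or else conclude that $C_\infty^j$ is all-critical.

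The main obstacle is non-periodicity. Here the key observation is that for any finite critical gap $G$ of a cubic invariant lamination with $\si_3|_{\partial G}$ a covering of degree $k\ge 2$, one has $|\si_3(G)\cap\uc|=|G\cap\uc|/k<|G\cap\uc|$, so the vertex count strictly decreases at each critical step along the forward orbit. Hence no finite critical set can be periodic, and non-periodicity is immediate if $C_\infty^j$ is finite. To rule out an infinite limit, I would appeal to the cubic critical budget: the degrees of the $C_i^j$ are uniformly bounded by $3$, and this bound, together with the invariance of $\lam_\infty$ from Theorem~\ref{t:sibliclos}, forces the number of vertices of $C_\infty^j$ to stay finite, reducing to the previous case.

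For the marked condition, I would argue by contradiction. Suppose a component $E$ of $\cdisk\sm(C_\infty^1\cup C_\infty^2)$ has boundary points $a\ne b$ with $\si_3(a)=\si_3(b)$, neither of which is an endpoint of a critical edge of $E$. The Hausdorff convergence $C_i^j\to C_\infty^j$ provides components $E_i$ of $\cdisk\sm(C_i^1\cup C_i^2)$ converging to $E$, and boundary approximants $a_i\to a$, $b_i\to b$ with $\si_3(a_i)=\si_3(b_i)$; these violate the marked property of $(\lam_i,\zc_i)$ for all large $i$. Combining these three steps yields the lemma.
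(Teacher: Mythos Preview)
Your criticality and marked-lamination steps are essentially sound, though for criticality the paper uses a cleaner observation: every vertex $v$ of $C_\infty^j$ is a limit of vertices of the $C_i^j$, and since each $C_i^j$ is a finite critical set (so every vertex has a sibling vertex in the same set), $v$ itself acquires a sibling vertex in $C_\infty^j$. This immediately forces $C_\infty^j$ to be critical without the chord-extraction and edge-slipping case analysis you sketch.

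The genuine gap is in your non-periodicity argument. Your observation that a \emph{finite} critical set cannot be periodic (because the vertex count drops strictly under $\si_3$) is correct and is implicitly used in the paper. But your attempt to reduce to this case by arguing that $C_\infty^j$ must itself be finite is not justified: there is no ``cubic critical budget'' that bounds the number of vertices of the $C_i^j$ uniformly, so a Hausdorff limit of finite critical gaps can perfectly well be an infinite gap. The degree bound (each $C_i^j$ maps at most $3$-to-$1$) constrains the \emph{ratio} $|C_i^j\cap\uc|/|\si_3(C_i^j)\cap\uc|$, not the cardinality $|C_i^j\cap\uc|$ itself.

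The paper handles this differently and does \emph{not} try to show $C_\infty^j$ is finite. Instead it argues by contradiction: if $C_\infty^j$ were periodic of period $n$, then (being critical) it would have to be an infinite gap, and $\si_3^n(C_\infty^j)=C_\infty^j$. But then for $i$ large the gap $C_i^j$, being Hausdorff-close to $C_\infty^j$, has $\si_3^n(C_i^j)$ also close to $C_\infty^j$, which forces $\si_3^n(C_i^j)=C_i^j$. This makes $C_i^j$ a periodic finite critical set, which is impossible by the vertex-count argument you already gave. So the right move is to push the periodicity \emph{back} to the approximants rather than push finiteness \emph{forward} to the limit.
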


\begin{proof}
Every vertex of $C_\infty^1$ has a sibling vertex in $C_\infty^1$. It
follows that $C_\infty^1$ is critical. If $C_\infty^1$ is periodic of
period, say, $n$, then, since it is critical, it is an infinite gap.
Then the fact that $\si_d^n(C_\infty^1)=C_\infty^1$ implies that any
gap $C_i^1$ sufficiently close to $C_\infty^1$ will have its
$\si_3^n$-image also close to $C_\infty^1$, and therefore coinciding with $C_i^1$.
Thus, $C_i^1$ is $\si_3$-periodic, which is impossible because $C_i^1$ is
finite and critical. Similarly, $C_\infty^2$ is critical and non-periodic.

Let us show that $(\lam_\infty, C_\infty^1, C_\infty^2)$ is a marked
lamination. To this end we need to show that on the boundary of
each component $E$ of $\cdisk\sm (C_\infty^1\cup C_\infty^2)$ the map
$\si_3$ is one-to-one (except for the endpoints of possibly existing
critical edges of such components). However this follows from
definitions and the fact that the same claim holds for all $(\lam_i,
\zc_i)$.
\end{proof}

Any marked lamination similar to $(\lam_\infty, C_\infty^1,
C_\infty^2)$ from Lemma~\ref{l:decrilim} will be called a \emph{limit
marked lamination}. In particular, a marked dendritic lamination is a
limit marked lamination (consider a constant sequence). Recall that for a compact
metric space $X$, the space of all its compact subsets with the
Hausdorff metric is denoted by $\mc(X)$.

As was explained in the Introduction, a marked cubic dendritic
polynomial always defines a marked cubic lamination. Take a marked
dendritic polynomial $(P, c^1, c^2)$ and let $(\lam,C^1,C^2)$ be the corresponding
marked lamination. 
Define the map $\Gamma:\md_3\to \mc(\cdisk)\times \mc(\cdisk)$ by setting
$\Gamma(P, c^1, c^2)=(C^1, C^2)$. Consider a sequence of marked dendritic
cubic geodesic laminations $(\lam_i,C_i^1,C_i^2)$. If $\lam_i$ converge
then, by Theorem~\ref{t:sibliclos}, the limit $\lam_\infty$ is itself a
cubic geodesic lamination, and by the above the critical patterns
$(C_i^1,C_i^2)$ converge to the critical pattern $(C_\infty^1,
C_\infty^2)$ of $\lam_\infty$. We are interested in the case when
$\lam^\infty$ is in a sense compatible with a dendritic lamination.

\begin{lem}[Lemma 6.18 \cite{bopt16}]\label{l:uppers}
Let $(\lam_i,C_i^1,C_i^2)$ and $(\lam_\infty,C_\infty^1,C_\infty^2)$ be
as above. If there exists a dendritic cubic geodesic lamination $\lam$
with a critical pattern $(C^1,C^2)$ such that $C^j_\infty\subset C^j$
for $j=1,2$. Then $\lam_\infty\supset \lam$.
\end{lem}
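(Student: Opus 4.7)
The plan is to deduce this inclusion from Theorem~\ref{t:noesli}. Since $\lam$ is dendritic, Lemma~\ref{l:perfectd} shows it is perfect, so Theorem~\ref{t:noesli} will yield $\lam\subset\lam_\infty$ as soon as the marked laminations $(\lam,C^1,C^2)$ and $(\lam_\infty,C^1_\infty,C^2_\infty)$ are shown to be linked or essentially equal in the sense of Definition~\ref{d:link3}.

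I will construct a quadratically critical portrait of $\lam_\infty$ by a limit argument from the approximating $\lam_i$. For each $i$, inscribe inside the finite critical set $C^j_i$ a critical quadrilateral $Q^j_i$ using a pair of oppositely $\si_3$-identified boundary vertices (taking $Q^j_i=C^j_i$ when $C^j_i$ is itself a critical leaf or quadrilateral), and arrange for $Q^j_i$ to share a pair of opposite edges with $C^j_i$ whenever $\si_3(Q^j_i)$ is non-degenerate, so as to meet the collapsing-quadrilateral clause in Definition~\ref{d:link3}. By the compactness statement in Lemma~\ref{l:qls}, after passing to a subsequence the $Q^j_i$ converge to critical quadrilaterals $Q^j_\infty\subset C^j_\infty\subset C^j$ forming a quadratically critical portrait of some tuning $\lamm_\infty\supset\lam_\infty$. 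Now set $Q^j:=Q^j_\infty$; since its spikes are critical chords of $\si_3$ and $Q^j$ sits inside the critical object $C^j$ of the dendritic $\lam$, Thurston's pullback construction (applied to the forward grand orbit of $C^j$, which is a pairwise disjoint family of finite non-periodic gaps) yields a tuning $\lamm\supset\lam$ admitting $(Q^1,Q^2)$ as a quadratically critical portrait. Because $Q^j=Q^j_\infty$ for $j=1,2$ the two portraits share every spike, so Definition~\ref{d:link3} together with case~(1) of Definition~\ref{d:qclink1} places $(\lam,C^1,C^2)$ and $(\lam_\infty,C^1_\infty,C^2_\infty)$ in the essentially-equal category, and Theorem~\ref{t:noesli} concludes $\lam\subset\lam_\infty$.

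The main obstacle I anticipate is the all-critical-triangle case: if some critical object $C^j$ of $\lam$ is an all-critical triangle $\Delta$, then $\lam$ admits several legal quadratically critical portraits (pairs of edges of $\Delta$, or $\Delta$ together with one of its edges), and the naive choice $Q^j:=Q^j_\infty$ may not align with any of them. This case is handled by Lemma~\ref{l:actri}: as soon as $\Delta$ appears as an all-critical triangle of both $\lam$ and $\lam_\infty$ (which follows from $C^j_\infty\subset C^j=\Delta$ combined with the critical-pattern conditions forcing $C^j_\infty$ to meet $\Delta$ in three all-critical vertices), the pair falls into case~(2) of Definition~\ref{d:qclink1} and is essentially equal. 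A secondary technical point is the well-definedness of the tuning $\lamm$ — i.e., that the inserted pullback chords do not cross existing leaves — which is automatic because $C^j$ is a finite non-periodic critical gap of a dendritic lamination, so its grand orbit is a collection of pairwise disjoint finite gaps into which images and pullbacks of $Q^j$ can be inserted consistently.
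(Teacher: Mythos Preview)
The paper does not actually prove Lemma~\ref{l:uppers}: it is quoted verbatim from \cite{bopt16} (Lemma~6.18 there), with no argument supplied in the present text. So there is no in-paper proof to compare against; I can only comment on whether your strategy is sound and whether it matches what the machinery of Section~\ref{s:long} is set up to do.

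Your overall plan is exactly the intended one: since $\lam$ is dendritic (hence perfect by Lemma~\ref{l:perfectd}), Theorem~\ref{t:noesli} delivers $\lam\subset\lam_\infty$ as soon as you exhibit $(\lam,C^1,C^2)$ and $(\lam_\infty,C_\infty^1,C_\infty^2)$ as linked or essentially equal in the sense of Definition~\ref{d:link3}. That reduction is correct and is precisely what Theorem~\ref{t:noesli} is in the paper for.

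Two points about the execution. First, the passage through the approximating sequence $(\lam_i,C_i^1,C_i^2)$ is unnecessary. You already know $C_\infty^j\subset C^j$ with $C^j$ a finite critical gap of the dendritic $\lam$; hence $C_\infty^j\cap\uc\subset C^j\cap\uc$, and any critical chord with endpoints in $C_\infty^j$ is automatically a critical chord with endpoints among the vertices of $C^j$. You can therefore choose the quadrilaterals directly inside $C_\infty^j$ and skip the limit argument entirely. Second, and more substantively, when you set $Q^j:=Q_\infty^j$ for the $\lam$-side you do not verify the collapsing-quadrilateral clause of Definition~\ref{d:link3} relative to $C^j$: you arranged for $Q_i^j$ to share opposite edges with $C_i^j$, and the limit inherits this relative to $C_\infty^j$, but \emph{not} relative to the possibly strictly larger $C^j$. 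The fix is easy---replace $Q^j$ on the $\lam$-side by the collapsing quadrilateral in $C^j$ that has the same spike (such a quadrilateral exists because the spike's endpoints are vertices of $C^j$), and then the two portraits still share a spike in each slot, hence are essentially equal---but as written this step is a gap. You should also note why $Q_\infty^1\ne Q_\infty^2$ (so that you genuinely have a quadratically critical portrait), or else route that degenerate case through the all-critical-triangle discussion you already sketch.
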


Lemma~\ref{l:uppers} says that if critical patterns of dendritic cubic geodesic laminations converge \textbf{into}
a critical pattern of a dendritic cubic geodesic lamination $\lam$, then the limit lamination contains $\lam$.

\begin{cor}[Corollary 6.20 \cite{bopt16}]\label{c:crista}
Suppose that a sequence $(P_i, c_i^1,$ $c_i^2)$ of marked cubic dendritic
polynomials converges to a marked cubic dendritic polynomial $(P, c^1,
c^2)$. Consider corresponding marked laminational equivalence relations
$(\sim_{P_i}, C_i^1, C_i^2)$ and $(\sim_P, C^1, C^2)$. If
$(\lam_{\sim_{P_i}},C_i^1,C_i^2)$ converges to $(\lam_\infty,$
$C_\infty^1,$ $C_\infty^2)$, then we have $ \lam_\infty\supset
\lam_{\sim_P}, C_\infty^1\subset C^1, C_\infty^2\subset C^2.$ In particular,
the map $\Gamma$ is upper semi-continuous.
\end{cor}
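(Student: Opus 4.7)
The plan is to invoke Lemma~\ref{l:uppers} with $\lam=\lam_{\sim_P}$ and with the critical pattern $(C^1,C^2)$ coming from the critical marking of $P$. The hypothesis of that lemma is precisely the pair of inclusions $C^j_\infty\subset C^j$, $j=1,2$, and its conclusion is the desired $\lam_\infty\supset\lam_{\sim_P}$. Thus the bulk of the corollary reduces to proving these two inclusions. The final assertion, namely upper semi-continuity of $\Gamma$, will then be a formal consequence: given $P_i\to P$ in $\md_3$, pass to a subsequence along which the marked laminations $(\lam_{\sim_{P_i}},C_i^1,C_i^2)$ converge (this is possible by Theorem~\ref{t:sibliclos} and compactness of $\mc(\cdisk)\times\mc(\cdisk)$), and apply the first part to see that every subsequential limit of $\Gamma(P_i)$ lies componentwise in $\Gamma(P)$.

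To prove $C^j_\infty\subset C^j$, I would exploit upper semi-continuity of Kiwi's monotone semiconjugacy $m_P$ under parameter convergence. The $\sim_{P_i}$-class whose convex hull is $C_i^j$ consists of the angles of dynamical rays that (combinatorially, via $m_{P_i}$) land at the marked critical point $c_i^j$. Since $P_i\to P$ and $c_i^j\to c^j$, continuity of the Böttcher coordinate on the basin of infinity together with stability of landings of preperiodic rays at repelling preperiodic points implies that any limit angle $\theta$ of a sequence $\theta_i$ drawn from the $\sim_{P_i}$-class of $m_{P_i}(c_i^j)$ is an angle whose ray lands (combinatorially, via $m_P$) at $c^j$; hence $\theta$ belongs to the $\sim_P$-class of $m_P(c^j)$. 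Taking convex hulls, this yields $C^j_\infty\subset C^j$.

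With both inclusions in hand, Lemma~\ref{l:uppers} immediately gives $\lam_\infty\supset\lam_{\sim_P}$, completing the first part of the statement. The upper semi-continuity of $\Gamma$ is then a formality from compactness of the ambient space of compact-set pairs: any subsequential Hausdorff limit of $\Gamma(P_i)$ is contained componentwise in $\Gamma(P)$, which is exactly the definition of upper semi-continuity for a compact-set-valued map.

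The main obstacle is the passage from the analytic convergence $P_i\to P$ (with $c_i^j\to c^j$) to the combinatorial conclusion $C^j_\infty\subset C^j$: dendritic polynomials need not have locally connected Julia sets, so the Carath\'eodory prime end extension of the Böttcher map is not available on the boundary. The standard workaround, and the point we would lean on, is to argue at the level of external angles and the topological model furnished by $m_P$, for which the appropriate continuity properties are by now well understood (see \cite{kiwi97,bco11}); once the step is performed at the level of angles in $\uc$, pushing forward to $m_P(c^j)\in J_{\sim_P}$ and then to convex hulls is routine.
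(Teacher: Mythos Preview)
The paper does not give its own proof of this corollary; it is quoted verbatim from \cite{bopt16} (as Corollary~6.20 there). So there is no in-paper proof to compare against. Your overall architecture --- first establish $C^j_\infty\subset C^j$ and then invoke Lemma~\ref{l:uppers} to obtain $\lam_\infty\supset\lam_{\sim_P}$, with upper semi-continuity of $\Gamma$ following by a routine compactness/subsequence argument --- is the natural route and almost certainly the one taken in \cite{bopt16}.

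The genuine gap is in your middle paragraph, where you justify $C^j_\infty\subset C^j$ by appealing to ``stability of landings of preperiodic rays at repelling preperiodic points''. The angles in the $\sim_{P_i}$-class of $c_i^j$ need not be preperiodic: a dendritic polynomial can have critical points with infinite forward orbit (this is in fact the typical situation), so the rays landing at $c_i^j$ are typically not preperiodic and the classical stability lemma for such rays does not apply. The correct input is not ray-landing stability but the upper semi-continuity of the laminational equivalence $P\mapsto{\sim_P}$ itself on the dendritic locus, equivalently of Kiwi's impression map; this is precisely the content of the combinatorial continuity results in \cite{kiw05}. Once one knows that limits of $\sim_{P_i}$-classes are contained in $\sim_P$-classes, the inclusion $C^j_\infty\subset C^j$ is immediate. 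Your final paragraph effectively concedes this point and defers to \cite{kiwi97,bco11}, but the mechanism stated in the middle paragraph is not the one that works, and you should replace it by an explicit citation of the combinatorial-continuity theorem rather than the preperiodic-ray argument.
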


By Corollary~\ref{c:crista}, critical objects of dendritic invariant geodesic laminations $\lam_{\sim_P}$
associated with polynomials $P\in\md_3$ cannot explode under perturbation of $P$ (they may implode though).

\subsection{Mixed tags of geodesic laminations}

\begin{dfn}[Minor set]\label{d:minor}
Let $(\lam, C, D)$ be a marked lamination. Then $\si_3(D)$ is called the
\emph{minor set of $(\lam,C, D)$}.
\end{dfn}

Note that, in Definition \ref{d:admicri}, the set $C$ is not assumed to be critical.

\begin{dfn}[Co-critical set]\label{d:admicri}
Let $C$ be a leaf or a gap of a cubic invariant geodesic lamination $\lam$.
Assume that either $C$ is the only critical object of $\lam$, or there is exactly one hole of $C$ of length $>\frac13$.
If $C$ is the only critical object of $\lam$, we set $\coc(C)=C$.
Otherwise let $H$ be the unique hole of $C$ of length $>\frac 13$,
let $A$ be the set of all points in 
$H$ with the images in $\si_3(C)$, and
set $\coc(C)=\ch(A)$.
The set $\coc(C)$ is called the \emph{co-critical set} of $C$.
\end{dfn}

We now define tags of marked laminations.

\begin{dfn}[Mixed tag]\label{d:siblita}
Suppose that $(\lam,C^1,C^2)$ is a marked lamination.
Then we call the set $\ta_l(C^1,C^2)=\coc(C^1)\times \si_3(C^2)\subset \cdisk\times \cdisk$ the \emph{mixed tag} of $(\lam,C^1,C^2)$ or of $(C^1,C^2)$.
\end{dfn}

Sets $\coc(C^1)$ (and hence mixed tags) are well-defined. The mixed tag
$T$ of a marked lamination is the product of two sets, each of which is
a point, a leaf, or a gap. One can think of $T\subset \cdisk\times
\cdisk$ as a higher dimensional analog of a gap/leaf of a geodesic
lamination. We show that the union of tags of marked dendritic
laminations is a (non-closed) ``geodesic lamination'' in
$\ol{\disk}\times\ol{\disk}$. The main idea is to relate the
non-disjointness of mixed tags of marked dendritic laminations and
their limits with the fact that they have ``tunings'' that are linked
or essentially equal.

In Definition~\ref{d:unibi}, we mimic Milnor's terminology for
polynomials.

\begin{dfn}[Unicritical and bicritical laminations]\label{d:unibi}
A marked la\-mi\-nation (and its critical pattern) is called
\emph{unicritical} if its critical pattern is of form $(C, C)$ for some
critical set $C$ and \emph{bicritical} otherwise.
\end{dfn}

Clearly, a unicritical marked lamination has a unique critical object.
However a lamination $\lam$ with unique critical object may have a
bicritical critical pattern. By definition this is only possible if
$\lam$ has an all-critical gap $\Delta$ and the critical pattern is
either two edges of $\Delta$ or $\Delta$ and an edge of $\Delta$.

The following lemma is a key combinatorial fact about tags.

\begin{lem}\label{l:interlink}
Suppose that two marked laminations have non-disjoint mixed tags.
Suppose also that at least one of the two laminations is dendritic.
Then either the two marked laminations are linked or essentially equal, or
the geodesic laminations are equal and share an all-critical triangle.
\end{lem}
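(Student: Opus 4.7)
The plan is to reduce the lemma to verifying the hypothesis of Definition~\ref{d:link3}: from the assumption that the mixed tags intersect, I would extract, for each $i=1,2$, a quadratically critical portrait $(Q_i^1,Q_i^2)$ with $Q_i^j\subset C_i^j$, such that $(Q_1^1,Q_1^2)$ and $(Q_2^1,Q_2^2)$ are strongly linked in the sense of Definition~\ref{d:strolin}. Once these are in hand, Thurston's pullback construction supplies the ``tuned'' laminations $\lamm_i\supset\lam_i$ required by Definition~\ref{d:link3}, because by construction the forward orbits of the new spikes land on vertices of leaves or gaps of $\lam_i$ and hence introduce no linked leaves; the ``shared opposite edges'' requirement for collapsing $Q_i^j$ is automatic since the pairs of opposite vertices of $Q_i^j$ are drawn from $C_i^j\cap\uc$.

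The main construction splits into two independent arguments, one per coordinate. For the minor coordinate, I would pick $m\in \si_3(C_1^2)\cap\si_3(C_2^2)$; its $\si_3$-preimage on $\uc$ is a triple $Y=\{y_0,y_1,y_2\}$ in cyclic order, and each critical set $C_i^2$ of degree at least two must contain at least two elements of $Y$. A pigeonhole count produces an overlap between the two selections from $Y$, which is the combinatorial engine of strong linking. Choosing a second opposite-vertex pair analogously from another common point of $\si_3(C_1^2)\cap\si_3(C_2^2)$ (or reusing the first pair when $\si_3(C_i^2)$ degenerates to a single point) assembles $Q_i^2$ as a critical quadrilateral inside $C_i^2$, and the cyclic order on $Y$ forces $Q_1^2$ and $Q_2^2$ to be strongly linked. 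The same scheme applied to $\coc(C_i^1)$, which by Definition~\ref{d:admicri} is the convex hull of exactly those points in the major hole of $C_i^1$ that map into $\si_3(C_i^1\cap\uc)$, produces strongly linked $Q_i^1\subset C_i^1$.

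The principal obstacle is the all-critical triangle case, which is where the exceptional branch of the conclusion originates. By Corollary~\ref{c:cridisj} applied via Lemma~\ref{l:perfectd} to the dendritic $\lam_1$, an all-critical triangle $\Delta\subset\lam_1$ is the unique critical object of $\lam_1$ and satisfies $\coc(\Delta)=\Delta$ and $\si_3(\Delta)=\{q\}$ for some point $q\in\uc$, so only edges of $\Delta$ are candidate spikes and the two-coordinate construction can degenerate. If $\lam_2$ also contains an all-critical triangle $\Delta'$, then $q\in\si_3(\Delta')$ forces $\Delta'=\Delta$ because both are the unique $\si_3$-preimage of their common image point; Lemma~\ref{l:actri} together with Theorem~\ref{t:noesli} applied to any quadratically critical portrait built from edges of $\Delta$ then upgrades this to $\lam_1=\lam_2$, producing the shared-triangle conclusion. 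If instead $\lam_2$ has no all-critical triangle, $C_2^2$ still contains at least two vertices of $\Delta$ by the minor-coordinate argument, and the freedom to choose any two of the three edges of $\Delta$ as spikes of $Q_1^j$ allows the strong-linking construction to go through, restoring the main conclusion. A careful enumeration of the remaining degenerate sub-cases (collapsing critical quadrilaterals, shared critical vertices, and unicritical laminations with $C^1=C^2$) completes the argument; the hardest part is verifying that in each such sub-case the cyclic positions of the extracted spikes respect the interleaving required by Definition~\ref{d:strolin}.
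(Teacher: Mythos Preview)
Your overall strategy---extracting strongly linked critical quadrilaterals $Q_i^j\subset C_i^j$ from the nonempty intersection of the tags, and then invoking Definition~\ref{d:link3}---matches the paper's. The difficulty is that the extraction cannot be done coordinate by coordinate via a simple pigeonhole count; the two coordinates interact, and this interaction is exactly what produces the all-critical triangle branch.

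Concretely, your minor-coordinate argument has a gap. If $\si_3(C_1^2)\cap\si_3(C_2^2)$ happens to be a single point $m\in\uc$ (which can occur even when neither $\si_3(C_i^2)$ is degenerate), then from the triple $Y=\{y_0,y_1,y_2\}=\si_3^{-1}(m)$ your pigeonhole gives only that $C_1^2$ and $C_2^2$ share one preimage, say $y_0$, while the other preimages may differ, say $y_1\in C_1^2$ and $y_2\in C_2^2$. The resulting critical leaves $\ol{y_0y_1}$ and $\ol{y_0y_2}$, viewed as degenerate critical quadrilaterals $[y_0,y_0,y_1,y_1]$ and $[y_0,y_0,y_2,y_2]$, are \emph{not} strongly linked in the sense of Definition~\ref{d:strolin} and do not share a spike, so Definition~\ref{d:link3} does not apply. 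The paper avoids this by first treating the co-critical coordinate: a shared vertex $a$ of $\coc(C_1^1)$ and $\coc(C_2^1)$ determines the \emph{same} critical chord $\ell=\ol{(a+\tfrac13)(a+\tfrac23)}\subset C_1^1\cap C_2^1$, so $Q_1^1=Q_2^1=\ell$ share a spike automatically. This common $\ell$ then forces all vertices of $C_1^2,C_2^2$ into the length-$\tfrac23$ arc $A$ bounded by $\ell$, on which $\si_3$ is two-to-one; hence a shared vertex $z\ne\si_3(\ell)$ of the minor sets pulls back to a \emph{common} critical chord in $A$. Only when $\si_3(C_1^2)\cap\si_3(C_2^2)=\{\si_3(\ell)\}$ does this fail, and that is precisely when $\Delta=\ch(a,\ell)$ emerges; the paper then proves $\Delta$ is a gap of both laminations.

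Your handling of the triangle branch also runs in the wrong direction: you assume $\lam_1$ already carries an all-critical triangle and deduce $\lam_2$ shares it, whereas the triangle must be \emph{discovered} from the degenerate intersection data. Finally, once $\Delta$ is shared, Theorem~\ref{t:noesli} gives only $\lam_1\subset\lam_2$; the upgrade to $\lam_1=\lam_2$ requires the additional facts that periodic gaps of $\lam_1$ have a single edge-cycle \cite{kiw02} and that $\lam_1$ has no wandering gaps \cite{bl02}, which you do not invoke.
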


The proof of Lemma \ref{l:interlink} is mostly non-dynamic and
involves checking various cases. We split the proof into
propositions. Observe that mixed tags are determined by critical
patterns; we do not need laminations to define mixed tags. In
Propositions~\ref{p:mnondeg} - \ref{p:quadfup} we assume that the
critical patterns $(C_1^1, C_1^2)$ and $(C_2^1,
C_2^2)$ are bicritical and have non-disjoint mixed tags.

\begin{prop}
 \label{p:mnondeg} Suppose that some distinct edges of $\coc(C_1^1)$
 and $\coc(C_2^1)$ cross. Then the two critical patterns are linked or
 essentially equal.
\end{prop}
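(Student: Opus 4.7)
The plan is to certify the conclusion of Definition~\ref{d:link3} by producing quadratically critical portraits $(Q_i^1,Q_i^2)$ with $Q_i^j\subset C_i^j$ such that $(Q_1^1,Q_1^2)$ and $(Q_2^1,Q_2^2)$ are strongly linked (with the shared-spike subcase recording essential equality). The $Q_i^1$'s will be built from the crossing edges of $\coc(C_1^1)$ and $\coc(C_2^1)$; the $Q_i^2$'s will be built from the non-empty intersection $\si_3(C_1^2)\cap\si_3(C_2^2)$ forced by the standing hypothesis that the mixed tags are non-disjoint.

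For the $^1$-component, let $\ell_i=\overline{x_iy_i}$ be edges of $\coc(C_i^1)$ that cross. By Definition~\ref{d:admicri} each vertex $v$ of $\coc(C_i^1)$ lies in the big hole $H_i$ of $C_i^1$ and satisfies $\si_3(v)\in \si_3(C_i^1\cap\uc)$. Since we are in the bicritical setting and $C_i^1$ is a critical set with degree two on its boundary, every point of $\si_3(C_i^1\cap\uc)$ has exactly two $\si_3$-preimages in $C_i^1\cap\uc$. Fix such partner pairs $\{x_i',x_i''\},\{y_i',y_i''\}\subset C_i^1\cap\uc$ for $x_i,y_i$ respectively, and set
\[
Q_i^1=[x_i',y_i',x_i'',y_i''],
\]
a critical quadrilateral contained in $C_i^1$ (with opposite vertices $\{x_i',x_i''\}$ and $\{y_i',y_i''\}$). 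Because $\si_3$ is a $3$-to-$1$ covering that preserves cyclic order on each of its preimage branches, the $\pm\tfrac{1}{3}$-translations carry $\ell_i$ to chords of $Q_i^1$ inside the complementary arc $\uc\setminus H_i$. Since $\ell_1,\ell_2$ cross in $\disk$, this branch-preserving translation forces the eight vertices of $Q_1^1\cup Q_2^1$ to alternate on $\uc$ in the pattern $a_0\le b_0\le\dots\le a_3\le b_3$ required by Definition~\ref{d:strolin}, giving strong linkage; in the boundary situation where a chosen spike of $Q_1^1$ coincides with a spike of $Q_2^1$ as a chord, we record the essential-equality alternative.

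For the $^2$-component, non-disjointness of the mixed tags yields $\si_3(C_1^2)\cap\si_3(C_2^2)\ne\0$. Pick a point or chord in this intersection and lift it through $\si_3$ to obtain partner preimages inside $C_1^2\cap\uc$ and $C_2^2\cap\uc$ (these $C_i^2$'s carry the degree of their respective laminations on their boundaries). These pulled-back pairs span critical quadrilaterals $Q_i^2\subset C_i^2$ that share the common lifted spike—giving strong linkage, and essential equality in the case the two spikes coincide. Together, $(Q_i^1,Q_i^2)$ is a quadratically critical portrait with $Q_i^j\subset C_i^j$, and the "if $Q_i^j$ is collapsing, it shares opposite edges with $C_i^j$'' clause of Definition~\ref{d:link3} is automatic from our construction of spikes from endpoints of edges of $C_i^j$.

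The main obstacle is the combinatorial bookkeeping for strong linkage: one must carefully track how a crossing of edges of $\coc$-sets sitting in the overlap of the big holes $H_1\cap H_2$ translates, under the $\pm\tfrac13$ rotations underlying $\si_3$, to an interleaving of preimages on the complementary arcs that house $C_1^1$ and $C_2^1$. One must also handle several degenerate configurations—some $Q_i^j$ being a critical leaf rather than a proper quadrilateral, $Q_i^j$ coinciding with $C_i^j$, or spikes of $Q_1^j$ and $Q_2^j$ coinciding—in order to cleanly split the conclusion between the "linked'' and "essentially equal'' alternatives of Definition~\ref{d:link3}.
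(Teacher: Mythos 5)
There is a genuine gap in your first-coordinate step. You claim that because the edges $\ell_1,\ell_2$ of $\coc(C_1^1)$, $\coc(C_2^1)$ cross in $\disk$, the $\pm\frac13$-translated quadruples $Q_1^1,Q_2^1$ automatically alternate as in Definition~\ref{d:strolin}. That is false from the crossing alone: take $a_1=0$, $a_2=\frac15$, $b_1=\frac3{10}$, $b_2=\frac12$, so that $\ol{a_1b_1}$ and $\ol{a_2b_2}$ cross and both holes even have length $\le\frac13$. The translates give $Q_1^1$ with vertices $\frac13,\frac{19}{30},\frac23,\frac{29}{30}$ and $Q_2^1$ with vertices $\frac16,\frac8{15},\frac56,\frac{13}{15}$, and these do \emph{not} alternate (the vertices $\frac{19}{30}$ and $\frac23$ of $Q_1^1$ are consecutive with no vertex of $Q_2^1$ between them), so the quadrilaterals are not strongly linked and Definition~\ref{d:link3} is not certified. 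The missing ingredient is precisely the second coordinate of the mixed-tag hypothesis, which your argument never uses at this stage: the paper first shows each hole $(a_i,b_i)$ has length $\le\frac13$ (sibling argument), then localizes $\si_3(C_i^2)\cap\uc\subset[\si_3(b_i),\si_3(a_i)]$, and only then uses $\si_3(C_1^2)\cap\si_3(C_2^2)\ne\0$ to force $b_2\le a_1+\frac13$, i.e., all four endpoints lie in a single arc of length at most $\frac13$. With that localization the chain $a_1+\frac13<a_2+\frac13<b_1+\frac13<b_2+\frac13\le a_1+\frac23<\dots\le a_1$ holds and strong linkage follows; without it, it can fail as above.

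The second-coordinate step has a related unjustified claim. Lifting a common point (or linked edges) of $\si_3(C_1^2)\cap\si_3(C_2^2)$ through the $3$-to-$1$ map $\si_3$ does not by itself produce quadrilaterals in $C_1^2$ and $C_2^2$ sharing a spike or strongly linked, since the preimages relevant to $C_1^2$ and to $C_2^2$ may sit on different branches of $\si_3^{-1}$. The paper again relies on the localization obtained in the first part: all vertices of $C_1^2$ and $C_2^2$ lie in $[b_2,a_1+\frac13]\cup[b_2+\frac23,a_1]$, two arcs on each of which $\si_3$ is injective, and only then do linked edges or coinciding vertices of $\si_3(C_1^2)$, $\si_3(C_2^2)$ pull back to strongly linked quadrilaterals $Q_1^2\subset C_1^2$, $Q_2^2\subset C_2^2$ sharing edges with their critical sets. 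So while your overall architecture (build $Q_i^1$ from the crossing co-critical edges, $Q_i^2$ from the intersecting minor sets) is the same as the paper's, both linkage assertions are exactly the places where the arc-localization argument is required, and that argument is absent from your proposal.
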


\begin{proof}
By the assumption, some distinct edges of the sets $\coc(C_1^1)$ and
$\coc(C_2^1)$ cross. Denote these linked edges by $\ol{a_1b_1}$ and
$\ol{a_2b_2}$, see Fig. \ref{f:mnondeg}. We may choose the orientation
so that $(a_1, b_1)$, $(a_2, b_2)$ are in the holes of $\coc(C_1^1)$,
$\coc(C_2^1)$ disjoint from $C_1^1$, $C_2^1$ respectively, and so that
$a_1< a_2< b_1< b_2$. We claim that $(a_1, b_1)$ is of length at most
$\frac13$. Indeed, if $(a_1, b_1)$ had length greater than $\frac13$,
then there would exist a sibling $\ell$ of $\ol{a_1b_1}$ with endpoints
in $(a_1, b_1)$. Evidently, $\ell$ would be an edge of $C^1_1$,
contradicting the choice of $(a_1, b_1)$. Thus, $(a_1, b_1)$ is of
length at most $\frac13$ and the restriction $\si_3|_{(a_1, b_1)}$ is
one-to-one. Similarly, $(a_2, b_2)$ is of length at most $\frac13$ and
the restriction $\si_3|_{(a_2, b_2)}$ is one-to-one.

Let us show now that $\si_3(C_1^2)\cap\uc\subset [\si_3(b_1), \si_3(a_1)]$.
If $C^1_1=C_1^2$ is of degree three, this follows immediately.
Otherwise, let $a_1'=a_1+\frac13$ and $b_1'=b_1+\frac23$.
Then $\ol{a_1'b_1'}\subset C^1_1$. Moreover, since $C^1_1$ is critical,
vertices of $C^1_1$ partition the arc $(a_1', b_1')$ into open arcs on each
of which the map is one-to-one. Hence $C_1^2$ must have
vertices in $[b_1', a_1]\cup[b_1, a_1']$. Since each of these intervals maps onto
$[\si_3(b_1),\si_3(a_1)]$ one-to-one, our claim follows.

We claim that $b_2\le a_1+\frac13$. Indeed, otherwise
$[b_1,a_1+\frac13]\subset [a_2,b_2)$, which implies that
$[\si_3(b_1),\si_3(a_1)]\subset [\si_3(a_2), \si_3(b_2))$. On the other
hand, by the above we have $\si_3(C_1^1)\subset
[\si_3(b_1),\si_3(a_1)]$ and $\si_3(C_2^1)\subset
[\si_3(b_2),\si_3(a_2)]$. Since $\si_3(C_1^2)\cap \si_3(C_2^2)\ne \0$,
we have in fact $b_1=a_2$, a contradiction. Thus, the points $a_i$
and $b_i$ for $i=1, 2$ belong to an arc of length at most $\frac13$.

We claim that then $\coc(\ol{a_1b_1})=Q^1_1$ and
$\coc(\ol{a_2b_2})=Q_2^1$ are strongly linked collapsing
quadrilaterals. Indeed, we have that $a_1<a_2<b_1<b_2\le a_1+\frac13$.
It follows then that
$$a_1+\frac13<a_2+\frac13<b_1+\frac13<b_2+\frac13\le
a_1+\frac23<a_2+\frac23<b_1+\frac23<b_2+\frac23\le a_1,$$ 
and therefore that, indeed, $Q^1_1$ and
$\coc(\ol{a_2b_2})=Q^1_2$ are strongly linked collapsing
quadrilaterals. Moreover, since $\ol{a_1b_1}$ and $\ol{a_2b_2}$ are
edges of $\coc(C^1_1)$ and $\coc(C_2^1)$ it follows that the quadrilateral
$Q^1_1$ shares two edges with the set $C^1_1$, and the quadrilateral
$Q^1_2$ shares two edges with the set $C_2^1$.

Note that all vertices of $C_1^2$ and $C_2^2$ are in $[b_2,a'_1]\cup
[b'_2,a_1]$, where $a'_1=a_1+\frac13$ and $b'_2=b_2+\frac 23$. The
restriction of $\si_3$ to each of the arcs $[b_2,a'_1]$, $[b'_2,a_1]$
is injective. Therefore, a pair of linked edges (or a pair of
coinciding vertices) of $\si_3(C_1^2)$ and $\si_3(C_2^2)$ gives rise to
a pair of linked quadrilaterals $Q_1^2$ and $Q_2^2$ in $C_1^2$ and
$C_2^2$, respectively, so that if these quadrilaterals share edges with containing
them critical sets.
\end{proof}

\begin{figure}
 \includegraphics[width=6cm]{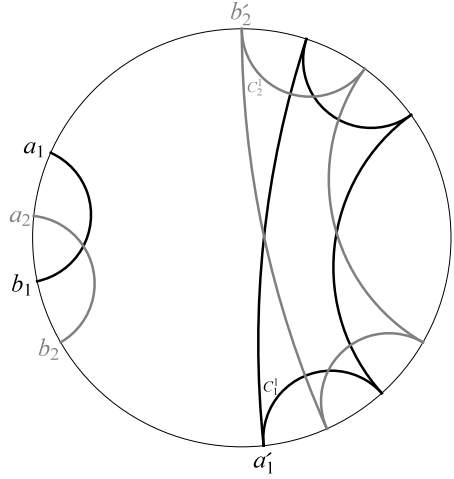}
 \caption{This figure illustrates Proposition~\ref{p:mnondeg}.}
 \label{f:mnondeg}
\end{figure}

Now we simply assume that $\coc(C_1^1)$ and $\coc(C_2^1)$ intersect. %

\begin{prop}
 \label{p:quadfup}
 If $\lam_1$ is dendritic, and $\lam_2$ is a limit marked lamination,
then at least one of the following holds:
 \begin{enumerate}
 \item  the two critical patterns are linked or essentially equal;
 \item  $\lam_1=\lam_2$ share an all-critical triangle $\Delta$. 
 \end{enumerate}
\end{prop}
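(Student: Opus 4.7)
The plan is to reduce the statement to Proposition~\ref{p:mnondeg} together with a separate treatment of the all-critical triangle case. By Proposition~\ref{p:mnondeg}, we may assume that no two distinct edges of $\coc(C_1^1)$ and $\coc(C_2^1)$ cross. Since both are convex subsets of $\cdisk$ inscribed in $\uc$, the remaining way they can meet is that either one contains the other (possibly coinciding), or they meet only in a shared boundary vertex or shared boundary edge. I would split the argument along these configurations.

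In the proper-inclusion configuration, I would use the standard correspondence between chords of a co-critical set and collapsing quadrilaterals inside the corresponding critical set: a chord $\ell$ in the interior of $\coc(C)$ determines a unique collapsing quadrilateral $Q \subset C$ whose vertices are the three $\si_3$-preimages of each endpoint of $\si_3(\ell)$, organized so that $Q$ shares a pair of opposite edges with $C$. Choosing a chord of $\coc(C_2^1)$ that strictly separates the two arcs cut out by $\coc(C_1^1)$ (which exists because the inclusion is proper and, by hypothesis, no distinct edges cross) yields collapsing quadrilaterals $Q_1^1 \subset C_1^1$ and $Q_2^1 \subset C_2^1$ whose vertices strictly alternate in $\uc$, hence are strongly linked per Definition~\ref{d:strolin}. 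When the co-critical sets instead share only a vertex or edge, the resulting quadrilaterals share a spike, which is exactly the degenerate strong linkage needed for essential equality. For the second factor, the intersection $\si_3(C_1^2)\cap\si_3(C_2^2)\ne\0$ together with dendriticity of $\lam_1$ (so that $C_1^2$ is finite and $\si_3|_{C_1^2}$ is a finite covering) lets me pull back witnesses of this intersection to produce quadrilaterals $Q_i^2\subset C_i^2$ that are strongly linked or share a spike, completing the verification of Definition~\ref{d:link3}.

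The residual subcase is when $\coc(C_1^1)=\coc(C_2^1)$ as sets and the analogous coincidence occurs on the minor side, so the constructions above degenerate. Here I would observe that coincidence of the co-critical sets forces $C_1^1$ and $C_2^1$ to have identical $\si_3$-images; combined with the intersection constraint on $\si_3(C_i^2)$ and the dendriticity/limit-lamination hypotheses, the only configuration avoiding linkage or essential equality is that a common critical object is an all-critical triangle $\Delta$. In that event Lemma~\ref{l:actri} gives essential equality of the marked laminations, and Theorem~\ref{t:noesli} forces $\lam_1\subset\lam_2$; because $\lam_1$ is dendritic and $\lam_2$ is a limit marked lamination, no proper extension is possible without violating the finiteness of critical patterns preserved under the convergence of Lemma~\ref{l:decrilim}, so $\lam_1=\lam_2$ and conclusion (2) holds. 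The main obstacle will be cleanly isolating this all-critical-triangle scenario from the inclusion/coincidence bookkeeping, since the pair $(\coc(C_1^1),\coc(C_2^1))$ may coincide in several geometrically distinct ways, and each must be shown either to produce a strongly linked quadratically critical portrait or to force the triangle $\Delta$.
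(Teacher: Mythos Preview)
Your overall plan---reduce to Proposition~\ref{p:mnondeg} and then handle the non-crossing intersection---matches the paper's, but the execution has two genuine gaps.

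\textbf{The residual case is mis-identified.} The all-critical triangle does not arise from coincidence of the co-critical sets. It arises inside your shared-vertex case: when $\coc(C_1^1)$ and $\coc(C_2^1)$ share a vertex $a$, you correctly take $Q_1^1=Q_2^1=\ell$ where $\ell$ is the critical chord with $\coc(\ell)=a$. For the second factor you then need quadrilaterals $Q_i^2\subset C_i^2$ distinct from $\ell$ (a quadratically critical portrait requires two \emph{distinct} quadrilaterals). This works if $\si_3(C_1^2)\cap\si_3(C_2^2)$ contains a linked pair of edges or a vertex $z\ne\si_3(\ell)$. But when $\si_3(C_1^2)\cap\si_3(C_2^2)=\{\si_3(\ell)\}$, the only candidates for $Q_i^2$ are edges of $\Delta=\ch(a,\ell)$, and you cannot in general get a valid linked pair of portraits. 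This is the actual residual case, and your Case~2b would falsely claim essential equality here. The paper isolates exactly this degeneracy and then proves $\Delta$ is a gap of both laminations.

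\textbf{The proof that $\lam_1=\lam_2$ is where the hypotheses are really used, and your argument does not use them.} Two things need work. First, one must show $\Delta$ is a gap of $\lam_2$; a priori $\Delta$ could sit strictly inside a larger critical gap $G$ of $\lam_2$. The paper builds $\hlam_2\supset\lam_2$ by inserting $\ell$ and a second edge $\ell^*$ of $\Delta$ and pulling back (this is legitimate precisely because, by Lemma~\ref{l:decrilim}, the critical sets of the limit marked lamination $\lam_2$ are non-periodic), applies Theorem~\ref{t:noesli} to get $\lam_1\subset\hlam_2$, and then uses perfectness of $\lam_1$ to see that edges of $\Delta$ are approximated from outside, forcing $G=\Delta$. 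Second, ``finiteness of critical patterns preserved under convergence'' does not prevent $\lam_2\supsetneq\lam_1$: extra leaves could in principle live in non-critical gaps of $\lam_1$. The paper rules this out using \cite{kiw02} (a periodic finite gap has a single cycle of vertices, so no invariant chord fits inside) and \cite{bl02} (a dendritic lamination has no wandering gaps). Your appeal to Lemma~\ref{l:decrilim} alone does neither job.
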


\begin{proof}
We will use the same notation as in the proof of Proposition
\ref{p:mnondeg}. If $\coc(C_1^1)$ and $\coc(C_2^1)$ have distinct edges
that cross in $\disk$, then Proposition \ref{p:mnondeg} applies. Assume
now that $\coc(C_1^1)$ and $\coc(C_2^1)$ share a vertex $a$. Clearly,
there is a unique critical chord $\ell$ such that $\coc(a)=\ell$. Then
$C_1^1\cap C^1_2\supset \ell$, and we may set
$Q_1^1=Q_2^1=\ell$.

Both sets $C_i^2$ have vertices in the closed arc $A$ of length $\frac
23$ bounded by the endpoints of $\ell$. By our assumption,
$\si_3(C_1^2)\cap\si_3(C_2^2)\ne\0$. If the sets $\si_3(C_i^2), i=1,2$ have a pair of
linked edges or share a vertex $z\ne \si_3(\ell)$, then these edges or
$z$ can by pulled back to $\ch(A)$ as a pair of linked critical
quadrilaterals. Assume now that $\si_3(C_1^2)\cap
\si_3(C_2^2)=\{\si_3(\ell)\}$.

Clearly, $a\in A$. Set $\Delta=\ch(a,\ell)$. We claim that $\Delta$ is
a gap of $\lam_1$. Indeed, the set $C_1^2$ contains at least two
vertices of $\Delta$ and is non-disjoint from $C_1^1$. Since
$\lam_1$ is dendritic, $\lam_1$ has a unique critical object $E$.
If $E\ne \Delta$, then by definition the critical pattern of $\lam_1$
is $(E, E)$, a contradiction with the assumption that $\lam_1$ is
bicritical. Thus, $\Delta$ is a gap of $\lam_1$.

We claim that $\Delta$ is a gap $\lam_2$.
We prove first that there is an edge $\ell^*\ne \ell$ of $\Delta$ such
that one of the sets $C_2^1, C_2^2$ contains $\ell$ while the other one
contains $\ell^*$. This is obvious if $C_2^2$ contains an edge $\ell^*\ne \ell$ of $\Delta$.
Otherwise $C_2^2\supset \ell$. Then
$\ell$ must be an edge of $C_2^2$ because otherwise the sets $C_2^1$
and $C_2^2$ will have either non-disjoint interiors, or one of them is
contained in the interior of the other one, a contradiction. Similarly,
$\ell$ is an edge of $C_2^1$. It follows that one of the sets $C_2^1,
C_2^2$ is $\ell$ while the other one is a critical gap $G\ne \Delta$ with $\ell$ as an
edge.

By the above, $\ell$ and $\ell^*$ are either leaves of $\lam_2$ or are
contained in gaps of $\lam_2$. Moreover endpoints of $\ell$ and
$\ell^*$ are not periodic since $\Delta$ is a gap of a dendritic
lamination $\lam_1$. Hence $\ell$ and $\ell^*$ can be pulled back in a
unique way and its pullbacks either will be contained in gaps of
$\lam_2$ or will be leaves of $\lam_2$. This yields a new lamination
$\hlam_2\supset \lam_2$ and a marked lamination $(\hlam_2, \ell,
\ell^*$). Consider also the marked lamination $(\lam_1, \ell, \ell^*)$.
Since these two marked laminations are essentially equal,
Theorem~\ref{t:noesli} implies that $\lam_1\subset \hlam_2$. Hence
$\Delta$ is a gap of $\hlam_2$ and, moreover, leaves shared by $\lam_1$
and $\hlam_2$ approximate all edges of $\Delta$ from outside of
$\Delta$.

It follows that $\Delta$ is a subset of a gap $G$ of $\lam_2$. Let us
show that $G=\Delta$. By Lemma \ref{l:decrilim}, $G$ is not periodic.
Hence pullbacks of $\ell$ and $\ell^*$ do not re-enter $G$, and so an
edge of $\Delta$ contained in the interior of $G$ (except for the
endpoints) remains isolated in both $\lam_2$ and $\hlam_2$. However in
the previous paragraph we concluded that it is not isolated in
$\hlam_2$, a contradiction. We conclude that $\Delta$ is a gap of
$\lam_2$.

Let us show that $\lam_1=\lam_2$.
We can adjust the critical pattern of $\lam_2$ so that it coincides with the critical pattern of $\lam_1$.
By Theorem~\ref{t:noesli}, we then have $\lam_2\supset\lam_1$.
Moreover, no leaves of $\lam_2$ are contained in the unique critical set $\Delta$ of $\lam_1$.
By \cite{kiw02}, any periodic gap of $\lam_1$ has a single cycle of edges.
We conclude that no leaves of $\lam_2$ are contained in periodic or preperiodic gaps of $\lam_1$.
Finally, by \cite{bl02} there are no wandering gaps of $\lam_1$.
This implies that $\lam_2=\lam_1$, as claimed.
\end{proof}

This proves Lemma \ref{l:interlink} for two bicritical marked laminations.
Consider unicritical marked laminations.

\begin{lem}\label{l:uniclink}
Suppose that $(\lam_1,C_1,C_1)$ and $(\lam_2,C_2,C_2)$ are
marked unicritical laminations with non-disjoint mixed tags. Then
$(\lam_1,C_1^1,C_1^2)$ and $(\lam_2, C_2^1,C_2^2)$ are linked or
essentially equal where $C_i^j$ either equals $C_i$ or is a critical chord
contained in $C_i$.
\end{lem}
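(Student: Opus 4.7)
By Definition~\ref{d:admicri}, the unicritical hypothesis gives $\coc(C_i)=C_i$, so the mixed tag of $(\lam_i,C_i,C_i)$ is $C_i\times\si_3(C_i)$. Non-disjointness of the two mixed tags therefore yields $C_1\cap C_2\ne\0$ together with $\si_3(C_1)\cap\si_3(C_2)\ne\0$. Since the laminations are cubic and unicritical, each $C_i$ is either an all-critical triangle or a degree-three critical gap whose boundary maps three-to-one onto its image. The goal is to produce quadratically critical portraits $(Q_i^1,Q_i^2)$ (after possibly tuning $\lam_i$ to $\lam_i^m\supset\lam_i$) with $Q_i^j\subset C_i^j$, where each $C_i^j$ is either $C_i$ or a critical chord contained in $C_i$, such that Definition~\ref{d:qclink1} applies.

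I plan to split the argument into two cases. In the \emph{first case} I assume that either $C_1\cap C_2$ or $\si_3(C_1)\cap \si_3(C_2)$ meets $\uc$. The key observation is that three-to-one criticality of each $C_i$ forces every vertex $v$ of $C_i$ to appear together with its two siblings $v+\tfrac13$ and $v+\tfrac23$ in $C_i\cap\uc$. Hence if $v\in C_1\cap C_2\cap\uc$, the entire triple $\{v, v+\tfrac13, v+\tfrac23\}$ lies in both $C_1$ and $C_2$; and if instead $y\in \si_3(C_1)\cap \si_3(C_2)\cap\uc$, its three $\si_3$-preimages lie in both $C_1$ and $C_2$ by the same principle. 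In either subcase $C_1$ and $C_2$ share two critical chords $\ell=\overline{v(v+\tfrac13)}$ and $\ell'=\overline{(v+\tfrac13)(v+\tfrac23)}$. Setting $C_i^1=\ell$, $C_i^2=\ell'$ and $Q_i^j$ equal to the corresponding degenerate critical quadrilateral (which is not collapsing, so the edge-sharing condition of Definition~\ref{d:link3} is vacuous) produces the common quadratically critical portrait $(\ell,\ell')$ for both marked laminations. Both pairs of spikes are then shared across $i=1,2$, so the marked laminations are essentially equal via Definition~\ref{d:qclink1}(1). If one of the $C_i$ happens to be an all-critical triangle, the same triple argument forces it to be contained in the other critical object, and the same essential equality holds.

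In the \emph{remaining case} both $C_1\cap C_2$ and $\si_3(C_1)\cap \si_3(C_2)$ are contained in $\disk$ and miss $\uc$; this forces both $C_i$ to be non-triangular degree-three critical gaps, whose vertex sets on $\uc$ are disjoint and yet whose convex hulls meet, so that some edge of $C_1$ crosses some edge of $C_2$ in $\disk$. I take $C_i^1=C_i^2=C_i$ and construct the quadratically critical portraits inside each $C_i$ by exploiting the three-to-one triple symmetry of $C_i\cap\uc$: using an edge-crossing pair between $C_1$ and $C_2$ as in the proofs of Propositions~\ref{p:mnondeg} and \ref{p:quadfup}, I select in each $C_i$ two critical quadrilaterals $Q_i^1$, $Q_i^2$ whose spikes are critical chords through triples straddling the crossing edges, so that both pairs $(Q_1^1,Q_2^1)$ and $(Q_1^2,Q_2^2)$ end up strongly linked; linkage then follows from Definition~\ref{d:qclink1}(1).

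The main obstacle is precisely this last case: the two quadrilaterals inside each $C_i$ must be chosen compatibly with the three-to-one vertex symmetry of $C_i\cap\uc$ so that both pairs across $i=1,2$ are simultaneously strongly linked, and one must verify that whenever such a $Q_i^j$ is collapsing, it shares a pair of opposite edges with $C_i$, as demanded by Definition~\ref{d:link3}.
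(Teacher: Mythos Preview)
Your first case is correct and matches the paper's treatment: once a vertex is shared (either upstairs in $C_1\cap C_2$ or downstairs in $\si_3(C_1)\cap\si_3(C_2)$), three-to-one criticality forces a common all-critical triangle, and any two of its edges give a shared quadratically critical portrait.

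The gap is in your second case, and you identify it yourself. Working with a crossing pair of edges of $C_1$ and $C_2$ and invoking Propositions~\ref{p:mnondeg} and~\ref{p:quadfup} does not directly produce the needed pair of strongly linked quadrilaterals in each $C_i$: those propositions are tailored to the bicritical setting, where the two critical sets are separated and one manipulates $\coc(C_i^1)$ against $\si_3(C_i^2)$. Here everything lives inside a single degree-three gap, and the ``triple symmetry'' you mention has to be organized differently.

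The paper resolves this cleanly by working downstairs rather than upstairs. Since $\si_3(C_1)\cap\si_3(C_2)\ne\0$ and misses $\uc$, an edge $\ell_1$ of $\si_3(C_1)$ crosses an edge $\ell_2$ of $\si_3(C_2)$. Pull each $\ell_i$ back in full: $\si_3^{-1}(\ell_i)\subset C_i$ is a hexagon (three preimages of each endpoint), and because $\ell_1$, $\ell_2$ cross, the two hexagons have \emph{alternating} vertices on $\uc$. Each hexagon splits into two adjacent collapsing quadrilaterals $Q_i^1$, $Q_i^2$; the alternation of hexagon vertices gives strong linking of $(Q_1^j,Q_2^j)$ for $j=1,2$ automatically. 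Moreover, the opposite edges of each $Q_i^j$ are sibling preimages of the edge $\ell_i$ of $\si_3(C_i)$, hence are themselves edges of $C_i$, so the edge-sharing requirement of Definition~\ref{d:link3} is satisfied for free. This is the missing idea: pass to the images first, then pull back a \emph{single} crossing pair of edges to obtain the entire alternating structure at once.
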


\begin{proof}
Suppose that $\lam_1$ has an all-critical triangle $\Delta$ (and so
$C_1=\Delta$). Since the mixed tags intersect, then $\si_3(C_1)\in
\si_3(C_2)$ and hence $C_1\subset C_2$. Choosing two edges of $\Delta$
as a quadratically critical portrait in $C_1$ and in $C_2$, we see that
by definition $(\lam_1,C_1^1,C_1^2)$ and $(\lam_2, C_2^1,C_2^2)$ are
essentially equal. Suppose that neither invariant geodesic
lamination has an all-critical triangle. If $\si_3(C_1)\cap \si_3(C_2)$
contains a point $x\in \uc$, then the entire all-critical triangle
$\ch(\si_3^{-1}(x))=\Delta$ is contained in $C_1\cap C_2$; we can
choose the same two edges of $\Delta$ as a quadratically critical
portrait for both laminations. Otherwise, we may assume that an edge
$\ell_1$ of $\si_3(C_1)$ crosses an edge $\ell_2$ of $\si_3(C_2)$. This
implies that the hexagons $\si_3^{-1}(\ell_1)\subset C_1$ and
$\si_3^{-1}(\ell_2)\subset C_2$ have alternating vertices and proves
the lemma in this case too.
\end{proof}


\begin{proof}[Proof of Lemma \ref{l:interlink}]
Denote laminations in question by $\lam_1$ and $\lam_2$. If both
laminations are bicritical, then the result follows from Proposition
\ref{p:quadfup}. If both laminations are unicritical, then the result
follows from Lemma \ref{l:uniclink}. It remains to consider the case
where the first critical pattern $(C_1, C_1)$ is unicritical, and the
second one $(C_2^1, C_2^2)$ is bicritical.


Either an edge of $\si_3(C_1)$ crosses an edge of $\si_3(C_2^2)$ or a vertex of $\si_3(C_1)$ lies in $\si_3(C_2^2)$.
In either case, there are two sibling edges or sibling vertices of $C_2^2$ that are linked or coincide with edges or vertices of $C_1$.
Taking convex hulls of these pairs of (possibly degenerate) leaves, we obtain $Q_1^2\subset C_1$ and $Q_2^2\subset C_2^2$.
By construction, these are strongly linked quadrilaterals.
Similarly, there is a (possibly degenerate) leaf in $\coc(C_2^1)$ that is linked or equal to a leaf in $C_1$.
It follows that the two siblings of this leaf in $C_2^1$ are linked or equal to some leaves in $C_1$.
As above, this leads to strongly linked quadrilaterals $Q_1^1\subset C_1$ and $Q_2^1\subset C_2^1$.
It is easy to see that $(Q_1^1,Q_1^2)$ and $(Q_2^1,Q_2^2)$ are linked or essentially equal quadratically critical portraits.
\end{proof}

We are ready to prove Theorem \ref{t:dendrilink}. 

\begin{thm}\label{t:dendrilink}
If $(\lam_1,C_1^1,C_1^2)$ and $(\lam_2,C_2^1,C_2^2)$ are marked laminations and
$\lam_1$ is dendritic, then they have non-disjoint mixed tags if and only if $(1)$ or $(2)$ holds:
\begin{enumerate}
\item $\lam_1=\lam_2$ has an all-critical triangle $\Delta$,
it is not true that $C_1^1$ and $C_2^1$ are distinct edges of\, $\Delta$, and either $C_1^1\supset C_2^1$, or $C_2^1\supset C_1^1$.;
\item there is no all-critical triangle in $\lam_1\subset\lam_2$, and $C_1^j\supset C_2^j$ for $j=1,2$
(in particular, if $\lam_2$ is dendritic then $\lam_1=\lam_2$).
\end{enumerate}
\end{thm}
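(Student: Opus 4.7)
The plan is to combine Lemma~\ref{l:interlink}, Theorem~\ref{t:noesli}, and Lemma~\ref{l:perfectd} with a direct combinatorial computation of the mixed tag in the presence of an all-critical triangle. For the ``only if'' direction, assume the mixed tags are non-disjoint. By Lemma~\ref{l:interlink} we are in one of two cases: (a) the marked laminations are linked or essentially equal, or (b) $\lam_1=\lam_2$ and both share an all-critical triangle $\Delta$. In case (a), since $\lam_1$ is dendritic and hence perfect by Lemma~\ref{l:perfectd}, Theorem~\ref{t:noesli} yields $\lam_1\subset\lam_2$ and $C_1^j\supset C_2^j$ for $j=1,2$. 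If there is no all-critical triangle in $\lam_1\subset\lam_2$, this is exactly~(2); if $\lam_1$ does contain such a triangle, the rigidity of dendritic laminations (no wandering gaps by \cite{bl02} and a single cycle of edges on periodic gaps by \cite{kiw02}, applied as in the proof of Proposition~\ref{p:quadfup}) will upgrade $\lam_1\subset\lam_2$ to $\lam_1=\lam_2$, reducing us to case (b).

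In case (b) I compute the mixed tag directly. Since $\si_3$ collapses $\Delta$ to a single point $v$, the second factor $\si_3(C_i^2)$ equals $\{v\}$ for $i=1,2$ and the second factors always meet. Moreover $\coc(\Delta)=\Delta$ by definition, while $\coc(\ell)$ is the vertex of $\Delta$ opposite to $\ell$ for each edge $\ell$ of $\Delta$. A short case analysis then shows that $\coc(C_1^1)\cap\coc(C_2^1)\ne\emptyset$ exactly when $C_1^1$ and $C_2^1$ are \emph{not} two distinct edges of $\Delta$, in which situation one of the containments $C_1^1\supset C_2^1$ or $C_2^1\supset C_1^1$ holds automatically. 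This produces~(1).

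For the ``if'' direction, under hypothesis~(1) the same explicit computation shows the mixed tags meet, since the $\si_3$-images of $C_i^2$ both equal $\{v\}$ and the $\coc(C_i^1)$'s are non-disjoint by the case check. Under~(2) it suffices to use the ``$\ta_l$ preserves inclusions'' property already recorded in the introduction: $\si_3$ is trivially monotone, and for $\coc$, in the absence of an all-critical triangle the long hole of $C_2^1$ contains the long hole of $C_1^1$, and pulling back the nested images $\si_3(C_1^1)\supset\si_3(C_2^1)$ into those nested holes yields $\coc(C_1^1)\supset\coc(C_2^1)$; hence $\ta_l(\lam_1,C_1^1,C_1^2)\supset\ta_l(\lam_2,C_2^1,C_2^2)$ and the two tags meet. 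The main obstacle I anticipate is the all-critical-triangle branch of the forward direction, where upgrading the inclusions $\lam_1\subset\lam_2$ and $C_1^j\supset C_2^j$ to the full equality $\lam_1=\lam_2$ requires carefully excluding the possibility that $\lam_2$ inserts extra, dynamically consistent leaves into preperiodic or pre-critical gaps of $\lam_1$; this is precisely where the Kiwi-type structural rigidity for dendritic laminations, together with the uniqueness of the critical object, is essential.
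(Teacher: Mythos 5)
Your proposal is correct and takes essentially the same route as the paper's proof: Lemma~\ref{l:interlink} splits the forward direction into the linked/essentially-equal case, which Theorem~\ref{t:noesli} (using perfectness from Lemma~\ref{l:perfectd}) turns into the inclusions of case (2), and the shared all-critical-triangle case, which is settled by the explicit description of $\coc(\Delta)$ and $\coc$ of an edge, with the converse coming from the definitions of $\coc$ and $\si_3$-images. If anything, you are slightly more careful than the printed proof in the subcase where linked/essentially equal laminations contain an all-critical triangle, where your rigidity upgrade to $\lam_1=\lam_2$ (no wandering gaps, single edge cycle on periodic gaps, as at the end of Proposition~\ref{p:quadfup}) makes explicit a step the paper leaves implicit.
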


\begin{proof}
If the mixed tags of $(\lam_1,C_1^1,C_1^2)$ and $(\lam_2,C_2^1,C_2^1)$
are non-disjoint, then, by Lemma~\ref{l:interlink}, either
$\lam_1=\lam_2$ share an all-critical triangle $\Delta$, or these marked
laminations are linked or essentially equal. In the first case
consider several possibilities for the critical patterns.
One can immediately see that the only way the mixed tags are disjoint
is when  $C_1^1$ and $C_2^1$ are distinct edges of $\Delta$; since the
mixed tags are known to be non-disjoint we see that this corresponds to
case (1) from the theorem. In the second case the fact that our marked
laminations are linked or essentially equal implies, by
Theorem~\ref{t:noesli}, that case (2) of the theorem holds.
The opposite direction of the theorem follows from definitions.
\end{proof}

\subsection{Upper semi-continuous tags}

\begin{dfn}\label{d:usc}
A collection $\mathcal E=\{E_\alpha\}$ of compact and disjoint subsets of a metric space
$X$ is \emph{upper semi-continuous $($USC$)$} if, for every $E_\alpha$ and every open set
$U\supset E_\alpha$, there exists an open set $V$ containing $E_\alpha$ so that,
for each $E_\beta\in \mD$, if $E_\beta\cap V\ne\0$, then $E_\beta\subset U$.
A decomposition of a metric space is said to be \emph{upper semi-continuous $($USC$)$} if the corresponding collection of sets
is upper semi-continuous.
\end{dfn}

Upper semi-continuous decompositions are studied in \cite{dave86}.

\begin{thm}[\cite{dave86}]\label{t:dav}
If $\mathcal E$ is an upper se\-mi\-con\-ti\-nuous decomposition of a separable metric space $X$,
then the quotient space $X/\mathcal E$ is also a separable metric space.
\end{thm}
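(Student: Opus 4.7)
The plan is to verify the hypotheses of the Urysohn metrization theorem for the quotient space $X/\mathcal E$, that is, to show that it is Hausdorff, regular, and second countable. The common thread running through all three verifications is the following reformulation of the USC property: \emph{the quotient map $\pi:X\to X/\mathcal E$ is a closed map with compact fibers} (what is sometimes called a perfect map). I would prove this reformulation first, since it makes the remaining steps essentially formal. Given a closed set $C\subset X$ and a point $[E_\alpha]\notin\pi(C)$, the compact set $E_\alpha$ is disjoint from $C$, so by normality of the metric space $X$ there is an open $U\supset E_\alpha$ with $U\cap C=\emptyset$; USC then produces a saturated open neighborhood $V$ of $E_\alpha$ with $V\subset U$, and $\pi(V)$ is an open neighborhood of $[E_\alpha]$ disjoint from $\pi(C)$. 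In particular $X\setminus \pi^{-1}(\pi(C))$ is open, so $\pi$ is closed.

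Once $\pi$ is known to be closed with compact fibers, Hausdorffness of $X/\mathcal E$ is immediate: two distinct points correspond to two disjoint compact sets $E_\alpha$, $E_\beta$ in the normal space $X$, which are separated by disjoint open sets, and these can then be shrunk to saturated open neighborhoods by USC, whose $\pi$-images are disjoint open neighborhoods in $X/\mathcal E$. Regularity follows from the same pattern applied to a point $[E_\alpha]$ and a closed set $F\subset X/\mathcal E$ not containing it: the disjoint closed sets $E_\alpha$ and $\pi^{-1}(F)$ can be separated in $X$, and USC converts the separation into a separation downstairs.

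For second countability I would use a countable base $\{U_n\}_{n\in\mathbb N}$ of the separable metric space $X$ and form, for each finite union $W=U_{n_1}\cup\dots\cup U_{n_k}$, the set $W^\ast=X/\mathcal E \setminus \pi(X\setminus W)$. Because $\pi$ is closed, each $W^\ast$ is open in $X/\mathcal E$, and $[E_\alpha]\in W^\ast$ exactly when $E_\alpha\subset W$. Since every open neighborhood of $E_\alpha$ in $X$ contains such a finite union $W\supset E_\alpha$ (here compactness of $E_\alpha$ is essential), and USC lets us translate arbitrary open neighborhoods of $[E_\alpha]$ in the quotient into open neighborhoods of $E_\alpha$ in $X$, the collection $\{W^\ast\}$ is a countable base of $X/\mathcal E$. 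Urysohn's metrization theorem then gives that $X/\mathcal E$ is metrizable, and separability transfers via $\pi$ from any countable dense subset of $X$.

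The main obstacle is the second-countability step: a naive attempt using the $\pi$-image of a countable base of $X$ fails, because $\pi$ is not open in general, so the images of basic open sets need not be open in $X/\mathcal E$. The correct construction must exploit the fact that $\pi$ is \emph{closed} (to get openness of $W^\ast$) together with \emph{compactness of the fibers} (to get that finite unions of basic sets suffice to approximate any open neighborhood of a fiber). Both ingredients are precisely what USC plus the compactness of the elements of $\mathcal E$ provides, so although the manipulation is delicate, it is tailor-made for the hypothesis.
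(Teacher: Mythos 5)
Your proposal is correct: establishing that USC of a decomposition into compacta makes the quotient map $\pi$ closed (equivalently, that the saturated open kernel of any open set is open), and then deriving Hausdorffness, regularity, and second countability of $X/\mathcal E$ from closedness plus compact fibers before invoking Urysohn's metrization theorem, is a sound and complete line of argument. The paper offers no proof of its own, citing Daverman \cite{dave86}, and your argument is essentially the standard one given there (USC is equivalent to closedness of $\pi$, and a perfect quotient of a separable metric space is regular and second countable, hence separable and metrizable), so there is nothing different to compare.
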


Before applying Theorem~\ref{t:dav} to our\, tags, we draw a
distinction between two notions. If $\sim$ is a dendritic
laminational equivalence relation, and $\lam_\sim$ has critical objects
$X, Y$ then by a \emph{marked laminational equivalence relation} we
mean a triple $(\sim, X, Y)$ or a triple $(\sim, Y, X)$ (if $X\ne Y$),
and just $(\sim, X, X)$ (if $X=Y$). Consider a marked geodesic
lamination $(\lam_\sim, C_1, C_2)$. Each marked laminational
equivalence relation is associated with the corresponding marked
geodesic lamination (the first of the two critical sets in a marked
laminational equivalence relation becomes $C_1$ and the second becomes
$C_2$). However if $\sim$ has an all-critical triangle $\Delta$ then
there are more possibilities for $(\lam_\sim, C_1, C_2)$ than
just $(\lam_\sim, \Delta, \Delta)$. E.g., $C_1, C_2$ could be two
distinct edges of $\Delta$. Still, mixed tags of laminational
equivalence relations are mixed tags of the corresponding geodesic
laminations and so our results obtained for geodesic laminations apply
to them.

Recall that the map $\ta_l$ was defined in Definition~\ref{d:siblita}.
To a marked laminational equivalence relation $(\sim,C,D)$, or to its
critical pattern $(C, D)$, the map $\ta_l$ associates the corresponding
\emph{mixed tag} $\ta_l(\sim, C, D)=\coc(C)\times \si_3(D)\subset
\overline{\disk}\times\overline{\disk}$.

\begin{thm}\label{t:tagusc}
The family $\{\ta_l(C^1,C^2)\}=\cmld$ of mixed tags of cubic marked dendritic
invariant laminational equivalence relations forms an upper
semi-continuous decomposition of the union $\cmld^+$ of all these tags.
\end{thm}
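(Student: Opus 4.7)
The plan has two parts. First I would verify that mixed tags of cubic marked dendritic laminational equivalences are pairwise disjoint or equal, so that $\cmld$ is genuinely a decomposition of $\cmld^+$. Suppose two such tags meet. Applying Theorem~\ref{t:dendrilink} with either of the two (dendritic) laminations playing the role of $\lam_1$, we land in its case (1) or case (2). In case (2), the two symmetric applications give $C_1^j\supset C_2^j$ and $C_2^j\supset C_1^j$ for $j=1,2$, so the critical patterns and hence the tags coincide. In case (1) both equivalences share an all-critical triangle $\Delta$ as the unique critical object; the dendritic marking convention forces $C_i^1=C_i^2=\Delta$ for $i=1,2$, and again the tags coincide.

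For upper semi-continuity, suppose for contradiction USC fails at some dendritic tag $T_\alpha=\ta_l(C_\alpha^1,C_\alpha^2)$: there exist an open $U\supset T_\alpha$ and dendritic tags $T_n=\ta_l(C_n^1,C_n^2)$ meeting the $1/n$-neighborhood of $T_\alpha$ with $T_n\not\subset U$. Pick $x_n\in T_n$ with $x_n\to x\in T_\alpha$ and $y_n\in T_n\setminus U$; after passing to a subsequence, $y_n\to y\notin U$. Using Theorem~\ref{t:sibliclos} together with Hausdorff compactness of $\mc(\cdisk)$, extract a further subsequence along which $\lam_{\sim_n}\to\lam_\infty$ in $\mc(\mc(\cdisk))$ and the finite critical sets $C_n^j\to C_\infty^j$ in $\mc(\cdisk)$. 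By Lemma~\ref{l:decrilim}, $(\lam_\infty,C_\infty^1,C_\infty^2)$ is a (limit) marked lamination. Continuity of $\ta_l$ gives $T_n\to T_\infty=\ta_l(C_\infty^1,C_\infty^2)$ in the Hausdorff metric on $\cdisk\times\cdisk$, so $x\in T_\infty\cap T_\alpha$, reducing the problem to showing $T_\infty\subset T_\alpha$.

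Now I would invoke Lemma~\ref{l:interlink} applied to the dendritic marked lamination $(\lam_\alpha,C_\alpha^1,C_\alpha^2)$ and the limit marked lamination $(\lam_\infty,C_\infty^1,C_\infty^2)$. In the ``linked or essentially equal'' alternative, Theorem~\ref{t:noesli} yields $C_\alpha^j\supset C_\infty^j$ for $j=1,2$; since $\ta_l$ preserves inclusions, $T_\alpha\supset T_\infty$. In the ``shared all-critical triangle'' alternative, $\lam_\alpha=\lam_\infty$ has the unique critical object $\Delta$, so by the dendritic marking convention $C_\alpha^1=C_\alpha^2=\Delta$ and $T_\alpha=\Delta\times\{p\}$ with $p=\si_3(\Delta)$; meanwhile each $C_\infty^j$ is a critical set of $\lam_\infty$ contained in $\Delta$ (either $\Delta$ itself or an edge of $\Delta$), and a short calculation from Definition~\ref{d:admicri} shows $\coc(C_\infty^1)\subset\Delta$, while $\si_3(C_\infty^2)=\{p\}$ since $\si_3$ collapses $\Delta$ to $p$. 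Either way $T_\infty\subset T_\alpha\subset U$. Since $T_\infty$ is compact and $U$ open, $T_n\subset U$ for all large $n$, contradicting $y_n\notin U$. This completes USC, and then Theorem~\ref{t:dav} ensures $\mathcal{MD}_3^{comb}$ is a separable metric space.

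I expect the main technical obstacle to be the all-critical triangle alternative, since Theorem~\ref{t:noesli} does not give the containment $C_\alpha^j\supset C_\infty^j$ directly in that situation; instead one must verify $T_\infty\subset T_\alpha$ by hand, using the dendritic marking convention for $\lam_\alpha$ and the precise definition of $\coc(C)$ and of $\si_3(C)$ when $C$ is a subset of an all-critical triangle.
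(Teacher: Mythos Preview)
Your proposal is correct and follows essentially the same route as the paper. The paper's proof invokes Theorem~\ref{t:dendrilink} directly in both steps, whereas you unpack its ingredients (Lemma~\ref{l:interlink} together with Theorem~\ref{t:noesli}) for the USC step and treat the all-critical-triangle alternative by hand; this is a cosmetic difference, and in fact your explicit verification that $\coc(C_\infty^1)\subset\Delta$ and $\si_3(C_\infty^2)=\{p\}$ is more careful than the paper's one-line citation of Theorem~\ref{t:dendrilink} (whose case~(1) does not literally assert $C_\infty^j\subset C_D^j$, though the containment does hold under the dendritic marking convention $C_D^j=\Delta$).
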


\begin{proof}
If $(\sim_1,C_1^1,C_1^2)$ and $(\sim_2,C_2^1,C_2^2)$ are marked
dendritic laminational equivalence relation, and $\ta_l(C_1^1,C_1^2)$ and
$\ta_l(C_2^1,C_2^2)$ are non-disjoint, then, by
Theorem~\ref{t:dendrilink} applied to the marked geodesic laminations
$(\lam_{\sim_1},$ $C_1^1,$ $C_1^2)$ and $(\lam_{\sim_1}, C_2^1,C_2^2)$, we
have that the corresponding marked laminational equivalence relations
are equal, i.e.
$(\lam_{\sim_1},C_1^1,C_1^2)=(\lam_{\sim_2},C_2^1,C_2^2)$. Hence the
family $\{\ta_l(C^1,$ $C^2)\}$ forms a decomposition of $\cmld^+$.

Suppose next that $(\sim_i,\zc_i)$ is a sequence of marked dendritic
laminational equivalence relations with $\zc_i=(C_i^1,C^2_i)$. Assume
that there is a limit point of the sequence of their tags
$\coc(C^1_i)\times \si_3(C^2_i)$ that belongs to the tag of a marked
dendritic laminational equivalence $(\sim_D, \zc_D)$ where
$\zc_D=(C^1_D, C^2_D)$. By
\cite{bmov13} and Lemma~\ref{l:decrilim}
we may assume that the sequence $(\lam_{\sim_i}, \zc_i)$ converges to a
marked lamination $(\lam_\infty, C_\infty^1, C_\infty^2)$ with critical
pattern $\fup_\infty=(C^1_\infty,C^2_\infty)$. By the assumption,
$\ta_l(\zc_D)\cap \ta(\fup_\infty)\ne\0$. By Theorem~\ref{t:dendrilink},
$\lam_D\subset \lam_\infty$ and $C_\infty^j\subset C_D^j$ for
$j=1,2$. Hence $\ta_l(\lam_\infty, \fup_\infty)\subset
\ta_l(\lam_D,\zc_D)$.
\end{proof}

Denote the quotient space of $\cmld^+$, obtained by collapsing every
element of $\cmld$ to a point, by $\md_3^{comb}$ (elements
of $\cmld$ are mixed tags of critical patterns of marked dendritic
laminational equivalence relations). Let $\pi:\cmld^+\to \md_3^{comb}$
be the quotient map. By Theorem~\ref{t:dav}, the topological space
$\md_3^{comb}$ is separable and metric.  We show that $\md_3^{comb}$
can be viewed as a combinatorial model for $\mathcal{MD}_3$. Recall
that the map $\Gamma:\md_3\to \mc(\cdisk)\times \mc(\cdisk)$ was defined
right before Lemma~\ref{l:uppers}.

\begin{thm}\label{t:polytags}
The composition $\pi\circ \ta_l\circ \Gamma:\md_3\to \md_3^{comb}$ is a continuous surjective map.
\end{thm}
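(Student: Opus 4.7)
The plan is to verify surjectivity and continuity separately.

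Surjectivity follows from Kiwi's realization theorem \cite{kiwi97, kiw05}: every dendritic laminational equivalence relation arises from some complex polynomial, and for a prescribed critical pattern of such an equivalence relation one can choose a compatible critical marking. Hence every point of $\md_3^{comb}$ lies in the image of $\pi\circ\ta_l\circ\Gamma$.

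For continuity, suppose $(P_i, c_i^1, c_i^2)\to(P,c^1,c^2)$ in $\md_3$. It suffices to show that every subsequence has a sub-subsequence along which $\pi(\ta_l(\Gamma(P_i)))\to\pi(\ta_l(\Gamma(P)))$ in $\md_3^{comb}$. Using Theorem~\ref{t:sibliclos} and the compactness of $\mc(\cdisk)\times\mc(\cdisk)$, I extract such a sub-subsequence so that $\lam_{\sim_{P_i}}\to\lam_\infty$ and $\Gamma(P_i)=(C_i^1,C_i^2)\to(C_\infty^1,C_\infty^2)$. Corollary~\ref{c:crista} then gives a marked lamination $(\lam_\infty,C_\infty^1,C_\infty^2)$ with $\lam_{\sim_P}\subset\lam_\infty$ and $C_\infty^j\subset C^j$ for $j=1,2$; by the continuity of $\ta_l$ noted right after Definition~\ref{d:siblita}, the tags $T_i=\ta_l(\Gamma(P_i))$ converge in Hausdorff to $T_\infty=\ta_l(\lam_\infty,C_\infty^1,C_\infty^2)$.

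The next step is to apply Theorem~\ref{t:dendrilink} to the marked laminations $(\lam_{\sim_P},C^1,C^2)$ and $(\lam_\infty,C_\infty^1,C_\infty^2)$ to conclude that $T_\infty\cap T\ne\0$, where $T=\ta_l(\sim_P,C^1,C^2)$. When $\lam_{\sim_P}$ has no all-critical triangle, case (2) of Theorem~\ref{t:dendrilink} applies verbatim, yielding non-disjoint tags. When $\lam_{\sim_P}$ does contain an all-critical triangle $\Delta$, one must verify that the critical patterns fit case (1): the key observation is that $\Gamma(P)$ assigns $C^j$ to be the whole $\sim_P$-class containing $m_P(c^j)$, so in particular $C^j$ is never a mere edge of $\Delta$, which rules out the ``distinct edges'' exceptional sub-case and places us squarely in case (1).

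Finally, I invoke upper semi-continuity of the decomposition $\cmld$ on $\cmld^+$ (Theorem~\ref{t:tagusc}). Given any open neighborhood $W$ of $\pi(T)$ in $\md_3^{comb}$, its pre-image $\pi^{-1}(W)$ is open in $\cmld^+$ and contains $T$; upper semi-continuity yields an open $V\supset T$ with any element of $\cmld$ meeting $V$ contained in $\pi^{-1}(W)$. Because $T_i\to T_\infty$ in Hausdorff and $T_\infty\cap T\ne\0$, the tags $T_i$, which themselves belong to $\cmld$ since each $P_i\in\md_3$, meet $V$ for all large $i$, so $\pi(T_i)\in W$. This shows $\pi(T_i)\to\pi(T)$, completing the argument. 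The main obstacle is the degenerate case when $\lam_{\sim_P}$ has an all-critical triangle, where strict containment $\lam_{\sim_P}\subsetneq\lam_\infty$ can occur and one must trace through Theorem~\ref{t:dendrilink} carefully to obtain the needed non-empty intersection of tags.
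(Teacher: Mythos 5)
Your surjectivity argument and your final step (extracting a Hausdorff-convergent subsequence, using Corollary~\ref{c:crista}, and then feeding the non-disjointness of $T_\infty$ and $T$ into the upper semi-continuity of the decomposition $\cmld$) are sound, and in outline this is the same mechanism as the paper's proof, which is a one-liner: $\Gamma$ is upper semi-continuous and surjective, $\ta_l$ is continuous and \emph{preserves inclusions}, and $\pi$ is continuous.

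The gap is in your middle step, in the all-critical-triangle case. Case (1) of Theorem~\ref{t:dendrilink} has the hypothesis $\lam_1=\lam_2$; you apply it to $(\lam_{\sim_P},C^1,C^2)$ and $(\lam_\infty,C^1_\infty,C^2_\infty)$ after checking only the ``not two distinct edges of $\Delta$'' clause and the containment $C^1_\infty\subset C^1$, while you yourself concede that $\lam_{\sim_P}\subsetneq\lam_\infty$ can occur. So the ``if'' direction of Theorem~\ref{t:dendrilink} does not apply as you invoke it, and your argument does not yet yield $T_\infty\cap T\ne\0$ in that case. (Note also that you only have the forward-implication hypotheses of that theorem available; the theorem is stated as an equivalence whose cases (1)--(2) are tailored to dendritic-versus-limit pairs, not to the containments you have.) The repair is immediate and bypasses Theorem~\ref{t:dendrilink} altogether: Corollary~\ref{c:crista} gives $C^j_\infty\subset C^j$ for $j=1,2$, and since $\ta_l$ preserves inclusions (as stated after its definition in the Introduction and used in the proofs of Theorems~\ref{t:tagusc} and~\ref{t:polytags}), one gets $T_\infty=\coc(C^1_\infty)\times\si_3(C^2_\infty)\subset \coc(C^1)\times\si_3(C^2)=T$, which is stronger than the non-disjointness you need and makes no case distinction on all-critical triangles. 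With that substitution your proof is correct and is essentially an unpacked version of the paper's.
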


\begin{proof}
By definition and Corollary~\ref{c:crista}, the map $\Gamma$ is upper
semi-continuous and surjective. Also, $\ta_l$ is continuous in the
Hausdorff topology and preserves inclusions. Finally, $\pi$ is
continuous by definition. Thus, $\pi\circ \ta_l\circ
\Gamma:\md_3\to \md_3^{comb}$ is a continuous surjective map as
desired.
\end{proof}



\end{document}